\documentclass[11pt]{amsart}
\usepackage[margin=1.05in]{geometry}
\usepackage{amsmath,amssymb,mathtools}
\usepackage{tikz}
\usepackage{float}
\usetikzlibrary{arrows.meta,positioning,decorations.pathmorphing}
\definecolor{brightred}{RGB}{225,55,65}
\definecolor{brightblue}{RGB}{30,105,210}
\definecolor{brightgreen}{RGB}{30,170,95}
\definecolor{turquoise}{RGB}{15,180,185}
\definecolor{brightorange}{RGB}{245,145,30}
\definecolor{brightviolet}{RGB}{145,75,205}
\definecolor{softgray}{RGB}{238,241,245}
\tikzset{
 section/.style={brightred,very thick},
 fiber/.style={brightblue,very thick},
 dual/.style={brightgreen!85!black,very thick},
 tube/.style={brightorange,very thick},
 sumarrow/.style={-{Latex[length=2.5mm]},very thick,brightblue},
 note/.style={font=\small,align=center}
}
\usepackage[hidelinks]{hyperref}
\hypersetup{
 pdftitle={Gurtas Lefschetz Fibrations, Rational Blowdowns, and Exotic Symplectic Four-Manifolds},
 pdfauthor={Anar Akhmedov and Sumeyra Sakalli},
 pdfsubject={Lefschetz fibrations, geometric duals, and rational blowdowns},
 pdfkeywords={Lefschetz fibration, symplectic four-manifold, rational blowdown, knot surgery, geometric dual}
}
\usepackage[nameinlink,noabbrev]{cleveref}

\newtheorem{theorem}{Theorem}[section]
\newtheorem{proposition}[theorem]{Proposition}
\newtheorem{lemma}[theorem]{Lemma}
\newtheorem{corollary}[theorem]{Corollary}
\theoremstyle{remark}
\newtheorem{remark}[theorem]{Remark}

\newcommand{\CP}{\mathbb{CP}}
\newcommand{\CPb}{\overline{\mathbb{CP}}{}^{\,2}}
\newcommand{\Mod}{\operatorname{Mod}}

\newcommand{\Int}{\operatorname{int}}

\newcommand{\normal}[1]{\left\langle\!\left\langle #1\right\rangle\!\right\rangle}

\title[Gurtas fibrations and rational blowdowns]{Gurtas Lefschetz Fibrations, Rational Blowdowns, and Exotic Symplectic Four-Manifolds}
\author{Anar Akhmedov}
\address{School of Mathematics, University of Minnesota, Minneapolis, MN 55455, USA}
\email{akhmedov@umn.edu}
\author{S\"umeyra Sakall\i}
\address{Department of Mathematics and Statistics, University of South Florida, Tampa, FL 33620, USA}
\email{sumeyras@usf.edu}
\subjclass[2020]{Primary 57R17, 57R55; Secondary 57R57, 57K20}
\keywords{Lefschetz fibration, symplectic four-manifold, rational blowdown, knot surgery, geometric dual}

\begin{document}
\begin{abstract}
We study negative spheres obtained from exceptional sections of Gurtas Lefschetz fibrations.  Starting with the known system of $4n$ disjoint $(-1)$-sections, we show that a marked double sum contains $4n$ symplectic $(-2)$-spheres and $2n$ smooth $(-4)$-spheres obtained by pairwise tubing.  By carrying the same sections through a fourfold sum, we instead obtain $4n$ disjoint symplectic $(-4)$-spheres and hence symplectic rational blowdowns.  We then consider the Lefschetz fibrations on knot-surgered elliptic surfaces.  Their branched-cover description gives $2n$ branch sections together with simultaneous geometric duals arising from node resolution.  In the knot-surgery double these sections glue to symplectic $(-4)$-spheres, while the duals make the complement of every subcollection simply connected.  The resulting rational blowdowns provide simply connected exotic symplectic four-manifolds under the parity condition stated in the main theorem.  We also record the boundary-multitwist relation determined by the Gurtas sections.
\end{abstract}
\maketitle

\section{Introduction}
Matsumoto's genus-two fibration and its higher-genus analogues have been used repeatedly in constructions of smooth four-manifolds.  Their exceptional sections provide surfaces used in many constructions of exotic smooth structures \cite{Akhmedov2008,AkhmedovPark,AMgenus2}.  Tanaka \cite{Tanaka} found boundary factorizations with $4g+4$ disjoint $(-1)$-sections in a family of hyperelliptic fibrations, and Hamada \cite{Hamada} studied the section problem for the Matsumoto-Cadavid-Korkmaz (MCK) fibrations.  Here these section systems are applied to Gurtas fibrations and their fiber sums.

Let
\begin{equation}\label{eq:Y}
 Y(n,k)\cong \Sigma_k\times S^2\#4n\CPb,
 \qquad g=2k+n-1,
\end{equation}
and let $f_{n,k}\colon Y(n,k)\to S^2$ denote the genus-$g$ Gurtas Lefschetz fibration \cite{Gurtas1,Gurtas2,AS}.  Akhmedov-Saglam \cite[Lemma~4.5]{AS} constructed $4n$ pairwise disjoint $(-1)$-sphere sections for $n\ge2$; the case $n=1$ is the even-genus MCK specialization and is covered by Hamada's four-boundary relation.

Tanaka's relation belongs to the genus-$(n-1)$ hyperelliptic summand used in the Akhmedov-Saglam construction, whereas the Gurtas fibration appears after the genus-enlarging fiber sum.  After choosing a reference fiber and framed neighborhoods of the section points (the points where the sections meet the fiber), the $4n$ disjoint $(-1)$-sections determine a factorization in $\Mod(\Sigma_g^{4n})$ with boundary term $t_{\delta_1}\cdots t_{\delta_{4n}}$.  This relation records the section points and framings; it does not identify a term-by-term stabilization of Tanaka's word with Gurtas' chosen relator.

The sections above will be used in two kinds of marked fiber sum.

There is an important distinction between the double and fourfold marked fiber sums.  Two corresponding sections join to form a symplectic $(-2)$-sphere; tubing such spheres in pairs gives smooth $(-4)$-spheres, but adjunction obstructs symplectic representatives of their classes for the specified sum form.  Four corresponding sections join directly to a symplectic $(-4)$-sphere, to which Symington's rational blowdown applies \cite{Symington}.  Rational blowdown here replaces the disk-bundle neighborhood $V_{-4}$ by the rational ball $B_2$ with the same lens-space boundary.

In this paper we give constructions via rational blowdown.  The rational blowdown problem also requires control of the fundamental group.  A dual sphere for one component kills its meridian, but its punctured disk may meet other components.  By a \emph{simultaneous} system of duals we mean that the dual to each chosen sphere meets that sphere once and is disjoint from all the others.  For the $2n$ vertical branch sections, one resolution sphere at each chosen node supplies such a system.

Here is the main result of the paper.
\begin{theorem}\label{thm:intro-main}
Let $K$ be a fibered knot of genus $h\ge1$ and $n\ge2$.  The knot-surgery double $D_K(n)$ contains $2n$ pairwise disjoint symplectic $(-4)$-spheres with simultaneous geometric duals.  The complement of every subcollection, and hence every rational blowdown along such a subcollection, is simply connected.  If $1\le m\le2n$ and $m\not\equiv0\pmod8$, the blowdown along $m$ spheres is homeomorphic but not diffeomorphic to
\[
 \#_{6n+4h-5}\CP^2\ \#\ \#_{22n+4h-5-m}\CPb.
\]
\end{theorem}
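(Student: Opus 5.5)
The plan is to build $D_K(n)$ as a marked fiber sum of two copies of the Lefschetz fibration on the knot-surgered elliptic surface $E(n)_K$. The fiber sum is taken along a regular fiber, with the gluing chosen so that the branch sections of one copy meet those of the other at matching section points. I would then verify the three assertions in order: spheres and duals, fundamental group, and the homeomorphism/diffeomorphism type.

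\textbf{Spheres and duals.} I use the branched-cover description of $E(n)_K$ as a double cover of a ruled surface branched along a curve containing $2n$ vertical components. Each branch section lifts to a section of self-intersection $-2$. In the double, two corresponding sections are glued across the fiber-sum region. The self-intersection of the glued sphere is the sum of the two section numbers, so I expect $-2+(-2)=-4$, and the sphere is a union of two disks along the identified normal circles. It is symplectic for the Gompf fiber-sum form, by the same argument as for the fourfold-sum spheres. For each chosen node of the branch curve, the resolution sphere meets exactly one branch section once and misses the others. I would choose the $2n$ nodes in distinct vertical components, away from the fiber-sum neighborhood, so that the duals survive in $D_K(n)$ and form a simultaneous system.

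\textbf{Simple connectivity.} First I show $\pi_1(D_K(n))=1$. By Seifert--van Kampen over the fiber-sum decomposition, the fundamental group of each side is generated by the fiber $\pi_1$, modulo the vanishing cycles. Since $K$ is fibered, the monodromy contains enough vanishing cycles to kill $\pi_1(\Sigma_g)$, and the section kills the boundary-fiber loop. Next, let $C$ be any subcollection of spheres. Removing $C$ adds only its meridians as generators, and each meridian bounds the punctured dual disk in $D_K(n)\setminus C$. That disk lies in the complement because the dual is disjoint from the other spheres. Hence $\pi_1(D_K(n)\setminus C)=1$. Since $\pi_1(\partial B_2)\to\pi_1(B_2)$ is onto, van Kampen then gives that the blowdown is simply connected.

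\textbf{Invariants and exoticity.} I compute $e$ and $\sigma$ of $D_K(n)$ from $e(E(n)_K)=12n$, $\sigma=-8n$, the fiber-sum correction $e(X_1)+e(X_2)+4(2g-2)$, and Novikov additivity. These should give $b^+=6n+4h-5$ and $b^-=22n+4h-5$. Each rational blowdown of a $(-4)$-sphere preserves $b^+$ and lowers $b^-$ by one, which yields the stated Betti numbers.

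\begin{itemize}
\item \emph{Parity.} The hypothesis $m\not\equiv0\pmod 8$ is used to show the intersection form is odd. If it were even, $\sigma\equiv0\pmod8$ and Rokhlin-type constraints would be violated, or equivalently the characteristic class could not be supported in the complement. I would argue directly by exhibiting a class of odd square, for instance a dual combined with a fiber.
\item \emph{Homeomorphism.} With odd form and $\pi_1=1$, Freedman's classification gives the homeomorphism.
\item \emph{Non-diffeomorphism.} The blowdown is symplectic by Symington's theorem and has $b^+>1$, so Taubes gives a nonzero Seiberg--Witten invariant. The connected sum has vanishing Seiberg--Witten invariants.
\end{itemize}

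\textbf{Main obstacle.} I expect the hardest step to be the precise genus and fiber-sum bookkeeping: making the Euler characteristic match the stated Betti numbers exactly, including how $h$ enters. A close second is confirming that the gluing of sections preserves symplecticity while the dual spheres remain disjoint from the sum region.
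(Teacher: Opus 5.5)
Your outline follows the paper's own route. You lift the $2n$ vertical branch components to $(-2)$-sections $Q_a$, glue matching sections across the fiber sum into symplectic $(-4)$-spheres, and use one node-resolution sphere per vertical component as a simultaneous dual system. The paper takes the nodes on $H_1$, so each dual lies in the singular fiber over $q_1$, away from the summing fiber and the knot-surgery torus. You then finish with van Kampen, signature parity plus Freedman, and Symington plus Taubes. The architecture is sound. However, the bookkeeping step you flagged as the main obstacle is wrong as written, and two justifications need repair.

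\textbf{Euler characteristic.} You remove $\nu F\simeq F$ from each summand and reglue along $F\times S^1$, which has Euler characteristic $0$. This gives
\[
 e(X_1\#_FX_2)=e(X_1)+e(X_2)-2e(F)=e(X_1)+e(X_2)+2(2g-2),
\]
not $+4(2g-2)$. With $e(E(n)_K)=12n$ and $g=2h+n-1$ this yields $e(D_K(n))=28n+8h-8$. Together with $\sigma=-16n$ and $b_2=e-2$, it gives exactly $b_2^+=6n+4h-5$ and $b_2^-=22n+4h-5$. Your formula would instead give $e=32n+16h-16$ and $b_2^+=8n+8h-9$, which contradicts the target.

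\textbf{Oddness.} Keep only your first argument. An even unimodular form has signature divisible by $8$; this is purely algebraic, so Rokhlin is not needed. Since $\sigma(D_{K,m}(n))=-16n+m\not\equiv0\pmod 8$, the form is odd. Your suggested direct witness cannot work: $A_a$ is disjoint from $F$, so $(xA_a+yF)^2=-2x^2$ is always even. For Freedman you should also record two things: the form is indefinite because $b_2^\pm>0$, and the Kirby--Siebenmann invariant vanishes because the manifold is smooth.

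\textbf{Simple connectivity of the sum.} Fiberedness of $K$ only produces the Lefschetz fibration. It does not by itself make the vanishing cycles normally generate $\pi_1(F)$. The correct input is the section together with $\pi_1(E(n)_K)=1$ (Fintushel--Stern).
\begin{enumerate}
\item The meridian of $F$ bounds the punctured section, so $\pi_1(E(n)_K)\cong\pi_1(F)/\normal{\text{vanishing cycles}}$.
\item Simple connectivity of $E(n)_K$ then forces $\pi_1(E(n)_K\setminus\nu F)\cong\pi_1(F)/\normal{\text{vanishing cycles}}=1$.
\item Van Kampen over the two simply connected fiber complements gives $\pi_1(D_K(n))=1$.
\end{enumerate}
The rest of your $\pi_1$ argument, with meridians bounding punctured duals and $\pi_1(L(4,1))\to\pi_1(B_2)$ surjective, matches the paper.
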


Our earlier paper \cite{AkhmedovSakalliIJM} used nodal deformations to obtain related negative configurations and generalized rational blowdowns.  Lefschetz fibrations and symplectic surgeries have led to many constructions of exotic four-manifolds; see, for example, \cite{Sakalli}.  The complement construction for one branch section is the one used by Akhmedov-Etnyre-Mark-Smith.  They showed that removing a regular fiber and a negative section from a Lefschetz fibration gives a Stein domain whose boundary contact structure is supported by the corresponding punctured-fiber open book.  Applying this construction to knot-surgery fibrations, they produced infinitely many contact three-manifolds, each admitting infinitely many homeomorphic but mutually nondiffeomorphic simply connected Stein fillings \cite[Theorem~1.1 and Lemma~4.1]{AEMS}.  Akhmedov-Ozbagci extended this picture to arbitrary finitely presentable groups.  For every finitely presentable group $G$, they constructed an isolated complex surface singularity link whose canonical contact structure admits infinitely many exotic Stein fillings with fundamental group $G$.  They also constructed infinitely many closed exotic symplectic four-manifolds with fundamental group $G$, each carrying a non-holomorphic Lefschetz fibration over $S^2$ \cite[Theorems~1 and~2]{AO}.

After recording the section relation in Section~2, we treat the double and fourfold sums in Section~3, geometric duals in Sections~4-5, and the resulting invariants and exotic structures in Section~6.

\section{Boundary relations and the Gurtas section system}
This section records the boundary relation determined by the Gurtas section system.  Throughout, $\Mod(\Sigma_g^r)$ denotes the group of orientation-preserving diffeomorphisms of the genus-$g$ surface with $r$ boundary components, fixing the boundary pointwise, modulo isotopy relative to the boundary.  All twists are right-handed Dehn twists, and products act from right to left.

\begin{proposition}\label{prop:correspondence}
Let
\[
 t_{a_N}\cdots t_{a_1}=1\in\Mod(\Sigma_g)
\]
define a Lefschetz fibration with pairwise disjoint sections $S_1,\ldots,S_r$ satisfying $S_i^2=-m_i$.  After deleting disjoint disks about the section points and choosing compatible lifts of the vanishing cycles, one has
\[
 t_{\widetilde a_N}\cdots t_{\widetilde a_1}
 =t_{\delta_1}^{m_1}\cdots t_{\delta_r}^{m_r}
 \quad\text{in }\Mod(\Sigma_g^r).
\]
Conversely, such a positive boundary-multitwist factorization determines the corresponding disjoint sections, provided the capped vanishing cycles remain homotopically nontrivial.
\end{proposition}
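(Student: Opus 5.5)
We argue by the standard correspondence between monodromy factorizations and fibrations over the disk with fixed boundary data, as in the work of Baykur, Korkmaz--Ozbagci, and Tanaka. Fix a regular value $b_0\in S^2$ and a disk $D\subset S^2$ containing all critical values, with $b_0\in\partial D$. The fibration over $D$ is determined up to isomorphism by the Hurwitz class of $(a_1,\ldots,a_N)$, and the factorization $t_{a_N}\cdots t_{a_1}=1$ expresses that the monodromy around $\partial D$ is trivial in $\Mod(\Sigma_g)$, so the fibration closes up over the complementary disk $D'$. The sections $S_i$ meet each fiber in a point. Over $D$ we choose an isotopy making each $S_i$ disjoint from the critical points. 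We then choose a horizontal trivialization near $S_i$: the normal bundle of $S_i$ restricted to $D$ is trivial, so small disks $\nu_i\subset F_b$ about $S_i\cap F_b$ may be chosen fiberwise. Removing these yields a bundle of surfaces with boundary $\Sigma_g^r$ over $D$ with Lefschetz singularities, whose vanishing cycles $\widetilde a_j$ lie in $\Sigma_g^r$ and cap off to $a_j$. This is the choice of compatible lifts.

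The key step is computing the boundary monodromy. The monodromy of the punctured fibration around $\partial D$ is $t_{\widetilde a_N}\cdots t_{\widetilde a_1}\in\Mod(\Sigma_g^r)$. It lies in the kernel of the capping map $\Mod(\Sigma_g^r)\to\Mod(\Sigma_g)$ after also forgetting the marked points. Because the sections extend over $D'$, the trivialization of the closed fibration over $D'$ carries the section points along. Hence this element lies in the kernel of $\Mod(\Sigma_g^r)\to\Mod(\Sigma_g,\{p_1,\ldots,p_r\})$, which is the free abelian group generated by the boundary twists $t_{\delta_i}$ for $g\ge2$. Thus the product equals $\prod t_{\delta_i}^{k_i}$. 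We then identify $k_i$ with $m_i$. The disk framings over $D$ and over $D'$ differ on $\partial D$ by a loop in $SO(2)$, whose degree is the obstruction to extending the normal framing of $S_i$ over the sphere. That degree is the Euler number $-S_i^2=m_i$. A right-handed twist $t_{\delta_i}$ corresponds to a single full rotation of the framing, and the sign convention fixes $k_i=m_i$ rather than $-m_i$.

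For the converse, we take the positive factorization in $\Mod(\Sigma_g^r)$ and build the Lefschetz fibration over $D$ with fiber $\Sigma_g^r$. Each boundary circle of the fiber sweeps out a torus $\partial D\times\delta_i$ with monodromy $t_{\delta_i}^{m_i}$. We glue in $D\times D^2$ pieces over each boundary component to cap the fibers, which produces $D$-sections, and then glue $D'\times\Sigma_g$ using the trivialization of the closed monodromy. The capped sections close up to spheres, and the same framing computation, run backwards, gives self-intersection $-m_i$. The hypothesis that the capped cycles remain homotopically nontrivial ensures that the capped vanishing cycles are genuine Lefschetz singularities rather than trivial ones that would create reducible fibers with a sphere component. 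This keeps the resulting fibration a Lefschetz fibration in the stated sense, and the sections remain disjoint from its critical points.

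The main obstacle is the sign and normalization in the framing step. Concretely, one must check that one boundary twist equals exactly one unit of relative Euler number, and that the product convention (right to left) combined with right-handedness gives $+m_i$. I would verify this on the model case of $\CP^2\#\CPb$ viewed as a sphere bundle over $S^2$, or via the genus-one relation $(t_at_b)^6=t_\delta$ for $E(1)$ with a single $(-1)$-section, and then invoke naturality.
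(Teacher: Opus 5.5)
Your proposal is correct and follows essentially the same route as the paper: delete fiberwise disk neighborhoods of the sections, compute the boundary monodromy of the punctured fibration, identify the exponent of $t_{\delta_i}$ with $-S_i^2$ through the framing (relative Euler number) obstruction, and cap off for the converse. Your observation that the monodromy lies in the kernel of $\Mod(\Sigma_g^r)\to\Mod(\Sigma_g,\{p_1,\ldots,p_r\})$, which is generated by the $t_{\delta_i}$, makes explicit a step the paper leaves implicit. For the sign check, use the $E(1)$ relation $(t_at_b)^6=t_\delta$ rather than $\CP^2\#\CPb$: in genus zero $\Mod(\Sigma_0^1)$ is trivial and $t_{\delta_1}=t_{\delta_2}$ in $\Mod(\Sigma_0^2)$, so that model cannot detect the sign.
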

\begin{proof}
Remove mutually disjoint fibered tubular neighborhoods of the sections.  Over the complement of the critical values, parallel transport fixes each new boundary circle pointwise.  The obstruction to extend the induced boundary framing across the base is the relative Euler number of the section normal bundle.  Traversing the boundary of the punctured base therefore produces $-S_i^2=m_i$ right-handed twists about $\delta_i$.  Multiplying the lifted local monodromies gives the displayed relation.  Conversely, cap each boundary component by a disk: the disk centers trace disjoint sections, and the exponent of $t_{\delta_i}$ is the negative of the corresponding self-intersection.  See \cite{Tanaka,Hamada} for this standard correspondence.
\end{proof}

By a \emph{marked fiber} we mean a regular fiber together with the ordered section points and their normal framings.  A \emph{marked gluing} preserves this data.  Tanaka constructed a genus-$h$ boundary factorization encoding $4h+4$ disjoint sections.  Akhmedov-Saglam constructed the Gurtas fibration by a genus-enlarging fiber sum in which these section neighborhoods remain fixed.  Thus Tanaka's marked section data belong to the hyperelliptic summand, while the Gurtas fibration is obtained after the enlargement.

For $n\ge2$, put $h=n-1$.  Tanaka's genus-$h$ word has $4h+4=4n$ boundary twists.  In the Akhmedov-Saglam construction, the genus is enlarged at two points disjoint from these section points.  The following statement, illustrated in \cref{fig:enlargement}, records only the topology of the marked fiber; it does not transport the positive word.

\begin{lemma}\label{lem:enlargement}
Let $D_-$ and $D_+$ be disks in $\Sigma_h^{4n}$ disjoint from its distinguished boundary components.  Gluing a copy of $\Sigma_k^1$ to each of the two new boundary circles gives
\begin{equation}\label{eq:surface-enlargement}
 \Sigma_{h+2k}^{4n}\cong
 \bigl(\Sigma_h^{4n}\setminus(\Int D_-\cup\Int D_+)\bigr)
 \cup_{\partial D_-}\Sigma_k^1\cup_{\partial D_+}\Sigma_k^1,
\end{equation}
and fixes all $4n$ distinguished boundary components pointwise.
\end{lemma}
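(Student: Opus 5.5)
The plan is to use the classification of compact orientable surfaces by genus and number of boundary components. The gluing construction supplies a diffeomorphism that is the identity away from the two disks, so the distinguished boundary circles are fixed automatically. No mapping class data are transported; we only identify the underlying marked surface.

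First, I will set $P=\Sigma_h^{4n}\setminus(\Int D_-\cup\Int D_+)$. Since $D_\pm$ are disjoint closed disks in the interior, away from the $4n$ distinguished boundary circles, $P$ is a compact connected orientable surface of genus $h$ with $4n+2$ boundary components. Connectivity holds because removing interior disks from a connected surface does not disconnect it.

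Next, I will glue a copy of $\Sigma_k^1$ to each of $\partial D_\pm$ by orientation-reversing boundary identifications, so that the orientations extend. Call the result $W$. I will compute its Euler characteristic:
\[
\chi(W)=\chi(P)+2\chi(\Sigma_k^1)=(2-2h-4n-2)+2(1-2k)=2-2(h+2k)-4n .
\]
Gluing along circles contributes nothing, since $\chi(S^1)=0$. The surface $W$ is connected, orientable and compact, and it has exactly the $4n$ distinguished boundary circles. Equivalently, each gluing is a boundary connected sum with a genus-$k$ surface, so genus is additive. Hence $W$ has genus $h+2k$, and the classification gives the diffeomorphism \eqref{eq:surface-enlargement}.

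For the last claim, I will take the identification to be the inclusion of $P$ in a collar neighborhood of the distinguished boundary components. Equivalently, the diffeomorphism $W\to\Sigma_{h+2k}^{4n}$ can be chosen to restrict to a fixed parametrization on these circles. Since the construction never moves points outside $D_-\cup D_+$, the distinguished boundary components, together with their collars and hence their framings, are fixed pointwise.

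The only mild point is this choice of parametrization. The classification provides a diffeomorphism preserving each boundary component, and I will adjust it by isotopies supported in collars so that it restricts to the identity there. This is always possible, because any orientation-preserving diffeomorphism of $S^1$ is isotopic to the identity. With $h=n-1$ the genus becomes $n-1+2k=g$, which matches \eqref{eq:Y}.
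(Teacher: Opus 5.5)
Your proof is correct and follows the paper's argument: the two attachments raise the genus by $2k$ without touching the $4n$ distinguished circles, and the classification of compact oriented surfaces gives the diffeomorphism; your Euler characteristic count and collar adjustment spell out what the paper states in one line. One small terminological slip: gluing $\Sigma_k^1$ along the whole circle $\partial D_\pm$ is an ordinary connected sum with the closed surface $\Sigma_k$, not a boundary connected sum, but your Euler characteristic computation does not rely on that remark.
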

\begin{proof}
The two attachments raise the genus by $2k$ and leave the $4n$ distinguished boundary circles unchanged.  The classification of compact oriented surfaces gives the stated diffeomorphism.
\end{proof}

\begin{figure}[t]
\centering
\resizebox{.96\textwidth}{!}{%
\begin{tikzpicture}[>=Latex]
\filldraw[fill=turquoise!10,draw=brightblue,rounded corners=8pt,very thick] (0,0) rectangle (4.25,2.05);
\node[font=\scriptsize\bfseries] at (2.125,1.68) {marked Tanaka fiber};
\node[font=\scriptsize] at (2.125,1.25) {$\Sigma_{n-1}^{4n}$};
\foreach \x/\c in {.45/brightred,.78/brightorange,1.11/brightgreen,3.14/brightviolet,3.47/turquoise,3.80/brightred}
  \fill[\c] (\x,.30) circle (.09);
\filldraw[fill=white,draw=brightviolet,thick] (1.30,.72) circle (.12) node[above=2pt,font=\scriptsize] {$D_-$};
\filldraw[fill=white,draw=brightviolet,thick] (2.95,.72) circle (.12) node[above=2pt,font=\scriptsize] {$D_+$};
\draw[sumarrow] (4.58,1.02)--(6.0,1.02);
\node[align=center,font=\tiny] at (5.29,1.58) {attach two\\genus-$k$ pieces};
\filldraw[fill=brightgreen!10,draw=brightgreen!75!black,rounded corners=8pt,very thick] (6.35,0) rectangle (10.55,2.05);
\node[font=\scriptsize\bfseries] at (8.45,1.68) {enlarged Gurtas fiber};
\node[font=\scriptsize] at (8.45,1.18) {$\Sigma_{2k+n-1}^{4n}$};
\foreach \x/\c in {6.80/brightred,7.13/brightorange,7.46/brightgreen,9.41/brightviolet,9.74/turquoise,10.07/brightred}
  \fill[\c] (\x,.30) circle (.09);
\node[font=\tiny] at (8.45,.64) {the same boundary circles $\delta_i$};
\end{tikzpicture}}
\caption{The marked-surface enlargement in \Cref{lem:enlargement}.  The figure records the surface topology only, not a term-by-term stabilization of a monodromy word.}\label{fig:enlargement}
\end{figure}

We call $(n,k)$ \emph{admissible} when $n\ge2$, $k\ge1$, or when $n=1$, $k\ge2$, and again write $f_{n,k}\colon Y(n,k)\to S^2$ for the genus-$g$ Gurtas Lefschetz fibration.

\begin{corollary}\label{thm:boundary-lift}
For every admissible $(n,k)$, there is a monodromy factorization $t_{a_N}\cdots t_{a_1}=1$ representing $f_{n,k}$ and lifts $\widetilde a_i\subset\Sigma_g^{4n}$ such that
\begin{equation}\label{eq:boundary-lift}
 t_{\widetilde a_N}\cdots t_{\widetilde a_1}
 =t_{\delta_1}\cdots t_{\delta_{4n}}
 \quad\text{in }\Mod(\Sigma_g^{4n}).
\end{equation}
The boundary components correspond to $4n$ pairwise disjoint sections of square $-1$.
\end{corollary}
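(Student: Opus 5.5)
The plan is to deduce \cref{thm:boundary-lift} from the forward direction of \Cref{prop:correspondence}, applied to the fibration $f_{n,k}$ together with a system of $4n$ pairwise disjoint $(-1)$-sections. The argument splits into the two admissible regimes.

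\textbf{Case $n\ge2$, $k\ge1$.} Here the sections are supplied by Akhmedov--Saglam \cite[Lemma~4.5]{AS}: they construct $4n$ pairwise disjoint sphere sections of square $-1$ in $Y(n,k)$. The route is as follows.
\begin{itemize}
\item[(i)] Fix a regular reference fiber $\Sigma_g$ with $g=2k+n-1$.
\item[(ii)] Mark the $4n$ section points and choose disjoint framed disks around them, so that the fiber becomes the marked surface $\Sigma_g^{4n}$.
\item[(iii)] Choose a Hurwitz system of vanishing paths, which gives a monodromy factorization $t_{a_N}\cdots t_{a_1}=1$ in $\Mod(\Sigma_g)$.
\item[(iv)] Apply \Cref{prop:correspondence} with $r=4n$ and $m_i=1$. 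This produces lifts $\widetilde a_i$, obtained by parallel transport along the vanishing paths in the complement of the section neighborhoods, satisfying \eqref{eq:boundary-lift}.
\end{itemize}
\Cref{lem:enlargement} serves only to identify the marked fiber. The Akhmedov--Saglam enlargement is performed at disks $D_\pm$ disjoint from the section points, so Tanaka's $4n$ marked boundary circles survive as the distinguished boundary components $\delta_i$ of $\Sigma_g^{4n}$. This is why the lifted word lives in $\Mod(\Sigma_g^{4n})$ with exactly these $\delta_i$. I will stress that no term-by-term comparison with Tanaka's word is needed: the relation is read off directly from the sections.

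\textbf{Case $n=1$, $k\ge2$.} Here $4n=4$ and $f_{1,k}$ is the even-genus MCK fibration of genus $2k$. In this case I will invoke Hamada's four-boundary relation \cite{Hamada}, which already has the form \eqref{eq:boundary-lift} with four boundary twists. By the converse direction of \Cref{prop:correspondence}, this relation determines four disjoint $(-1)$-sections. The converse requires that the capped vanishing cycles remain homotopically nontrivial, which holds because they are the nonseparating or essential separating vanishing cycles of the MCK fibration.

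The final sentence of the corollary is the correspondence of \Cref{prop:correspondence}: each boundary component $\delta_i$ caps off to a section whose square is $-1$, the negative of the exponent of $t_{\delta_i}$.

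The main obstacle is checking that the identification is compatible, and there are two points to verify.
\begin{itemize}
\item \textbf{Same fibration.} The factorization produced in each case must represent $f_{n,k}$ as defined via Gurtas's relator, and not merely some genus-$g$ fibration on $Y(n,k)$. To handle this, I would cite the statement in \cite{AS} that the fiber sum there is the Gurtas fibration, together with the fact that Hurwitz equivalence and global conjugation do not change the fibration.
\item \textbf{Framings.} The framings must be chosen so that each exponent is exactly $1$. This follows from $S_i^2=-1$ via the relative Euler number argument in the proof of \Cref{prop:correspondence}.
\end{itemize}
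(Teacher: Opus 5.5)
Your proposal is correct and follows essentially the same route as the paper. For $n\ge2$ you read the boundary relation off the Akhmedov--Saglam $(-1)$-sections via \Cref{prop:correspondence}, with exponents $-S_i^2=1$, and for $n=1$ you cite Hamada's four-boundary relation for the even-genus MCK fibration; the marked-fiber identification via \Cref{lem:enlargement} and the converse direction for $n=1$ are harmless elaborations the paper's proof does not need.
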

\begin{proof}
For $n\ge2$, Akhmedov-Saglam \cite[Lemma~4.5]{AS} construct $4n$ pairwise disjoint $(-1)$-sections of $f_{n,k}$.  Delete fiberwise disk neighborhoods of their intersection points with a reference fiber.  Parallel transport preserves the resulting boundary components, and \Cref{prop:correspondence} gives a relation in $\Mod(\Sigma_g^{4n})$; each boundary exponent equals $-S_i^2=1$.  Capping gives a factorization monodromy-equivalent to that of $f_{n,k}$, up to Hurwitz moves and global conjugation.  For $n=1$, Hamada's four-boundary relation for the even-genus MCK fibration gives the same conclusion \cite{Hamada}.
\end{proof}

\section{Section spheres in double and fourfold sums}
Let $E_1,\ldots,E_{4n}$ be the sections in \Cref{thm:boundary-lift}, and let $F$ be a regular fiber.  Then
\begin{equation}\label{eq:intersection-data}
 F^2=0,\qquad E_a^2=-1,\qquad E_a\cdot E_b=0\ (a\ne b),\qquad E_a\cdot F=1.
\end{equation}
We use the Akhmedov-Saglam section system: these sections arise from simultaneous symplectic exceptional spheres and may be taken symplectic for one Lefschetz-compatible form \cite[Lemma~4.5]{AS}.

We use $E_a$ for a Gurtas $(-1)$-section, $S_a$ for its $(-2)$-sphere in a double sum, and $R_a$ for its $(-4)$-sphere in a fourfold sum.  Later, $Q_a$ denotes a branch $(-2)$-section in $E(n)_K$ and $P_a$ the resulting $(-4)$-sphere in the knot-surgery double.

\subsection{The double sum}
Take two copies $Y_L,Y_R$ of $Y(n,k)$ and a marked fiber diffeomorphism $\varphi\colon F_L\to F_R$.  Form
\[
 X_\varphi=Y_L\#_{\varphi,F}Y_R.
\]
Deleting a fiber neighborhood turns each section into a disk.  The marked gluing joins the corresponding disks to a sphere
\[
 S_a=(E_a^L)^\circ\cup(E_a^R)^\circ,
 \qquad S_a^2=-2.
\]

\begin{proposition}\label{prop:minus-two}
There is a marked symplectic fiber-sum gluing for which $X_\varphi$ contains $4n$ pairwise disjoint symplectic spheres $S_1,\ldots,S_{4n}$ of square $-2$.
\end{proposition}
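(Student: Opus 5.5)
The plan is to build $X_\varphi$ with Gompf's symplectic fiber-sum construction, chosen compatibly with the section data, and then read off the spheres. First fix on $Y(n,k)$ a Lefschetz-compatible symplectic form $\omega$ for which the fiber $F$ and all sections $E_1,\ldots,E_{4n}$ are symplectic, as granted by \cite[Lemma~4.5]{AS} via \Cref{thm:boundary-lift}. Since the $E_a$ are pairwise disjoint and each meets $F$ transversely in one point, the symplectic neighborhood theorem lets us isotope $\omega$ near $F$, through forms keeping $F$ and the $E_a$ symplectic, until $\omega$ is a product $\omega_F\oplus\omega_{D^2}$ on a fibered neighborhood $\nu F\cong F\times D^2$. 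In this product each $E_a\cap\nu F$ is the flat disk $\{p_a\}\times D^2$, where $p_a=E_a\cap F$. This normalization is local near finitely many transverse intersection points plus a neighborhood of a symplectic surface of square zero. I will do it with a Moser argument relative to the sections, or by fiberwise parallel transport over $D^2$ using the symplectic connection.

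Next choose the gluing. Take $\varphi\colon F_L\to F_R$ to be the identity on the common model fiber $(F,\omega_F)$; this is a symplectomorphism sending $p_a^L\mapsto p_a^R$ and preserving the framings, so it is a marked gluing. Gompf's construction removes $F\times\frac12D^2$ from each side and glues the collars $F\times\{\frac12<|z|<1\}$ by $(x,z)\mapsto(\varphi(x),\bar z^{-1}\text{-type inversion})$. This map is orientation-reversing on the normal disks, and since the normal bundles of $F_L,F_R$ are trivial with opposite Euler numbers summing to zero, it produces a symplectic form on $X_\varphi$ agreeing with $\omega$ away from the neck. Because $\varphi(p_a^L)=p_a^R$, the annuli $\{p_a\}\times\{\frac12<|z|<1\}$ on the two sides are identified. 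So $S_a=(E_a^L)^\circ\cup(E_a^R)^\circ$ is a smoothly embedded sphere, and it is symplectic: it is a symplectic surface on each side, and in the neck it is the horizontal annulus, on which the glued form restricts positively.

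Finally compute the intersection data. Disjointness of $S_a$ and $S_b$ for $a\ne b$ is immediate, since $E_a^L\cap E_b^L=\emptyset$, the same holds on the right, and $p_a\neq p_b$ keeps the neck annuli apart. For the square, take the normal framing of $\{p_a\}\times D^2$ given by the marked fiber framing as a common trivialization over the neck. The relative Euler numbers of $E_a^L$ and $E_a^R$ with respect to this trivialization are each $-1$, and they add, giving $S_a^2=-1+(-1)=-2$. Equivalently, the homology class $[S_a]$ pairs with itself as the sum of the self-intersections of the two capped halves, because the marked gluing matches the push-offs along the neck.

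The main obstacle is the first step: putting $\omega$ in product form near $F$ so that all $4n$ sections become flat simultaneously, while keeping them symplectic. If a direct relative Moser argument is awkward, I will instead use the fibration structure. Near a regular fiber, the Thurston-type form $\omega_0+K\pi^*\sigma$ is already symplectic on the sections, since they are transverse to the fibers. Trivializing $\nu F$ by symplectic parallel transport along radial rays also straightens any $\omega$-symplectic section that is horizontal, and the sections can be isotoped to be horizontal near $F$ without changing their classes or disjointness.
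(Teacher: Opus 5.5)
Your proposal is correct and takes the same route as the paper. Both put the form in split product charts at the marked section points and perform Gompf's sum with a marked gluing matching the ordered points and framings. The section disks then glue to symplectic spheres whose relative Euler numbers add to $-2$, and disjointness follows from distinct marked points. You supply more detail on normalizing near $F$, which the paper simply asserts.

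One small correction concerns the explicit collar map. It should be $z\mapsto 1/z$: Gompf's orientation-reversing bundle isomorphism $z\mapsto\bar z$ composed with the inside-out map $v\mapsto v/|v|^2$, with the radial coordinate reparametrized so the map is area-preserving. Your $z\mapsto\bar z^{-1}$ is orientation-reversing on the annulus, so it is anti-symplectic for the split forms and would glue $Y_L^\circ$ to $Y_R^\circ$ with the wrong orientation.
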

\begin{proof}
Choose disjoint product charts at all $4n$ marked section points, with split symplectic forms and compatible orientations.  A fiber diffeomorphism can match the ordered points and framed normal circles simultaneously.  Gompf's relative symplectic sum then glues every pair of section disks symplectically; the relative normal Euler numbers add, and distinct marked points give disjoint glued spheres \cite{Gompf}.
\end{proof}

Pair the spheres as $(S_1,S_2),\ldots,(S_{4n-1},S_{4n})$.  Choose pairwise disjoint arcs between the spheres in each pair and take ordinary untwisted internal connected sums in disjoint four-balls.  This gives
\begin{equation}\label{eq:Cj}
 C_j=S_{2j-1}\#S_{2j},\qquad [C_j]=[S_{2j-1}]+[S_{2j}],\qquad C_j^2=-4.
\end{equation}
The two steps are shown in \cref{fig:double-tube}.

\begin{theorem}\label{thm:smooth-minus-four}
The marked double $X_\varphi$ contains $2n$ pairwise disjoint smoothly embedded $(-4)$-spheres $C_1,\ldots,C_{2n}$.
\end{theorem}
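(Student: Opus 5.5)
We start from \Cref{prop:minus-two}, which gives $4n$ pairwise disjoint (symplectic, hence smoothly embedded) spheres $S_1,\ldots,S_{4n}$ in $X_\varphi$ with $S_a^2=-2$. The plan is to tube these spheres in the pairs $(S_{2j-1},S_{2j})$, as in \eqref{eq:Cj}, and then check embeddedness, disjointness, genus and self-intersection.

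\textbf{Step 1: choose the arcs.} Since $X_\varphi$ is a connected $4$-manifold, the complement of the $S_a$ is connected, because each $S_a$ has codimension two. Fix $j$, a point $p_j\in S_{2j-1}$ and a point $q_j\in S_{2j}$. Pick an arc $\gamma_j$ from $p_j$ to $q_j$ whose interior misses $\bigcup_a S_a$. Arcs are $1$-dimensional in a $4$-manifold, so general position lets us take $\gamma_1,\ldots,\gamma_{2n}$ pairwise disjoint. It also lets us take them disjoint from every sphere except at their endpoints, and transverse to the spheres there.

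\textbf{Step 2: form the tubes.} Take a thin tubular neighborhood $N_j\cong \gamma_j\times D^3$. Choose the $N_j$ small enough to be pairwise disjoint, to meet $S_{2j-1}$ and $S_{2j}$ only in small disks around $p_j$ and $q_j$, and to miss all other $S_a$. Inside the four-ball $N_j$ we perform the standard internal connected sum: remove the two disks and glue in the annulus $\gamma_j\times S^1$, where $S^1$ is a normal circle to the spheres, using the untwisted framing. The result $C_j$ is a smoothly embedded surface. Topologically it is a connected sum of two spheres, so it is a sphere. Since the modifications happen in pairwise disjoint balls, and each $C_j$ is built only from $S_{2j-1}$, $S_{2j}$ and the tube in $N_j$, the spheres $C_j$ are pairwise disjoint.

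\textbf{Step 3: compute the square.} Orient the two spheres so that the connected sum is oriented. This can always be arranged by choosing the orientations of the summands consistently. Then $[C_j]=[S_{2j-1}]+[S_{2j}]$, so
\[
C_j^2=S_{2j-1}^2+2\,S_{2j-1}\cdot S_{2j}+S_{2j}^2=-2+0-2=-4.
\]
Here we use the disjointness from \Cref{prop:minus-two}, which gives $S_{2j-1}\cdot S_{2j}=0$.

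\textbf{Main obstacle.} The only delicate point is that the tube $\gamma_j\times S^1$ must be untwisted and compatible with the orientations, so that the homology class really is the sum of the two classes rather than a difference. We would handle this by fixing orientations on $S_{2j-1}$ and $S_{2j}$ by their symplectic orientations and orienting the annulus to match both boundary circles. If the orientations came out opposite, we would reverse the attaching direction of the annulus.

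Even if the class came out as the difference $[S_{2j-1}]-[S_{2j}]$, its square would still be $-4$, because the cross term vanishes. So the conclusion $C_j^2=-4$ does not depend on this choice, and the argument is robust.
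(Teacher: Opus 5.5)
Your proposal is correct and follows essentially the paper's argument. The paper likewise takes the $4n$ disjoint $(-2)$-spheres of \Cref{prop:minus-two}, tubes them in pairs along pairwise disjoint arcs by untwisted internal connected sums in disjoint four-balls, and reads off $[C_j]=[S_{2j-1}]+[S_{2j}]$ and $C_j^2=-4$ as in \eqref{eq:Cj}; your remark that the square is $-4$ for either relative orientation of the tube is a correct extra observation.
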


\begin{figure}[t]
\centering
\resizebox{.98\textwidth}{!}{%
\begin{tikzpicture}[>=Latex]
\filldraw[fill=turquoise!9,draw=brightblue,rounded corners=8pt,very thick] (0,0) rectangle (6.45,2.15);
\node[font=\bfseries] at (3.225,1.82) {marked fiber sum};
\filldraw[fill=white,draw=brightblue,thick] (.45,.38) rectangle (2.82,1.48);
\filldraw[fill=white,draw=brightgreen!70!black,thick] (3.63,.38) rectangle (6.0,1.48);
\draw[section] (.72,.68)..controls(1.45,.68) and (2.05,1.05)..(2.82,1.05);
\draw[section] (3.63,1.05)..controls(4.40,1.05) and (5.0,.68)..(5.73,.68);
\fill[brightorange] (2.82,1.05) circle (.09);
\fill[brightorange] (3.63,1.05) circle (.09);
\draw[sumarrow] (3.02,1.05)--(3.43,1.05);
\node[font=\small] at (1.63,.12) {$Y_L\setminus\nu F$};
\node[font=\small] at (4.82,.12) {$Y_R\setminus\nu F$};
\node[note,brightred] at (3.225,-.38) {$E_a^L\#E_a^R=S_a$, \quad $S_a^2=-2$};
\begin{scope}[xshift=7.7cm]
\filldraw[fill=brightviolet!7,draw=brightviolet,rounded corners=8pt,very thick] (-.55,0) rectangle (3.0,2.15);
\draw[section] (.1,1.05) circle (.5);
\draw[section] (2.35,1.05) circle (.5);
\draw[tube,line width=4pt] (.60,1.05)--(1.85,1.05);
\draw[white,line width=1.4pt] (.60,1.05)--(1.85,1.05);
\node[font=\bfseries\small,brightviolet,align=center] at (1.225,1.76) {internal connected\\sum};
\node[note] at (1.225,.25) {$C_j^2=-4$};
\end{scope}
\end{tikzpicture}}
\caption{Left: corresponding sections glue to a symplectic $(-2)$-sphere. Right: an orange tube joins two such spheres to form a smooth $(-4)$-sphere; by \cref{thm:adjunction}, its class is not symplectic for the standard sum form.}\label{fig:double-tube}
\end{figure}

\begin{theorem}\label{thm:adjunction}
For the symplectic sum form of \Cref{prop:minus-two}, none of the classes $[C_j]$ is represented by an embedded symplectic sphere.
\end{theorem}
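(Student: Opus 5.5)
The plan is to use the adjunction formula for embedded symplectic spheres. If an embedded symplectic sphere $\Sigma$ represents a class $A$, then
\[
 K_X\cdot A + A^2 = -2 .
\]
For $A=[C_j]$ we have $A^2=-4$, so a symplectic representative would force $K_X\cdot[C_j]=2$. It therefore suffices to compute the canonical class of the symplectic sum form of \Cref{prop:minus-two} on the classes $[S_a]$, and to show that $K_X\cdot[C_j]\ne2$.

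Each $S_a$ is itself an embedded symplectic sphere of square $-2$ for that form, by \Cref{prop:minus-two}. Applying adjunction to $S_a$ gives
\[
 K_X\cdot[S_a]=-2-S_a^2=0 .
\]
By \eqref{eq:Cj} and linearity,
\[
 K_X\cdot[C_j]=K_X\cdot[S_{2j-1}]+K_X\cdot[S_{2j}]=0 .
\]
Substituting into the adjunction formula gives $0+(-4)=-4\ne-2$, so no embedded symplectic sphere can represent $[C_j]$.

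There is a consistency check available. Gompf's formula for the canonical class of a fiber sum, $K_X=K_L+K_R+2F$ restricted appropriately, gives $K_{Y}\cdot E_a=-1$ on each side. Adding the fiber correction $2F\cdot E_a=2$ across the sum yields $-1-1+2=0$, which matches the value $K_X\cdot[S_a]=0$ found above.

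The one point needing care is that $K_X$ has to be the canonical class of the same symplectic form for which the $S_a$ are symplectic. That is exactly the hypothesis "for the symplectic sum form of \Cref{prop:minus-two}." I would also remark that orientation reversal does not rescue the argument: a symplectic sphere representing $-[C_j]$ would satisfy the same equation, with $K_X\cdot(-[C_j])=0$, and is ruled out in the same way.

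The main obstacle is simply making sure that adjunction applies. This holds because the $S_a$ are genuinely symplectic, which is given, and because adjunction holds for any embedded symplectic sphere regardless of whether $J$ is generic.
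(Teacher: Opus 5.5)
Your proposal is correct and matches the paper's proof: adjunction for the symplectic $(-2)$-spheres $S_a$ gives $K_{X_\varphi}\cdot[S_a]=0$, so by linearity $K_{X_\varphi}\cdot[C_j]=0$, which is incompatible with adjunction for a symplectic sphere of square $-4$. Your extra check via the fiber-sum formula for the canonical class ($-1-1+2=0$) is correct but not needed for the argument.
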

\begin{proof}
Adjunction for $S_a$ gives $K_{X_\varphi}\cdot S_a=0$.  Thus $K_{X_\varphi}\cdot C_j=0$ while $C_j^2=-4$.  A symplectic sphere in this class would satisfy $-2=C_j^2+K_{X_\varphi}\cdot C_j=-4$, a contradiction.
\end{proof}

\subsection{The fourfold sum}
Take four copies $Y_i=Y(n,k)$.  Use two distinct regular fibers in each middle summand and match corresponding marked section points at all three gluings:
\begin{equation}\label{eq:Z4}
 Z_4(n,k)=Y_1\#_F Y_2\#_F Y_3\#_F Y_4.
\end{equation}

\begin{theorem}\label{thm:symp-minus-four}
The marked sum $Z_4(n,k)$ can be chosen symplectically so that it contains $4n$ pairwise disjoint embedded symplectic spheres $R_1,\ldots,R_{4n}$ with $R_a^2=-4$.
\end{theorem}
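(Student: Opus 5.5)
We build $Z_4(n,k)$ as an iterated marked symplectic fiber sum, following the proof of \Cref{prop:minus-two}, and track the sections through each of the three gluings. In each middle summand $Y_2,Y_3$ we choose two distinct regular fibers $F',F''$. Each section $E_a$ meets each of them transversely in one point, since $E_a\cdot F=1$ by \eqref{eq:intersection-data}. Removing fibered neighborhoods $\nu F'\sqcup\nu F''$ therefore turns $E_a$ into an annulus $A_a$, while in the end summands $Y_1,Y_4$ it becomes a disk. We then glue the chain
\[
Y_1\setminus\nu F\;\cup\;Y_2\setminus(\nu F'\cup\nu F'')\;\cup\;Y_3\setminus(\nu F'\cup\nu F'')\;\cup\;Y_4\setminus\nu F.
\]
Each gluing uses a marked fiber diffeomorphism that carries the ordered section points and framed normal circles of one side to those of the other, as in \Cref{prop:minus-two}.

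Before gluing, we choose disjoint split product charts $D^2\times D^2$ around all $4n$ section points on each of the six fibers used, so that each section is a horizontal symplectic disk in its chart. Since the $E_a$ are symplectic for one Lefschetz-compatible form \cite[Lemma~4.5]{AS}, we may isotope the form near the fibers, as in Gompf's construction \cite{Gompf}, to achieve this normal form simultaneously at both fibers of a middle summand. This is possible because the two fibers have disjoint neighborhoods, so the local modifications do not interact. Gompf's relative symplectic sum, performed three times, then produces a symplectic form on $Z_4(n,k)$. For each $a$ it glues disk $\cup$ annulus $\cup$ annulus $\cup$ disk into a closed symplectic surface $R_a$. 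This surface is a sphere: gluing two disks and two annuli end to end along circles gives Euler characteristic $1+0+0+1=2$. The spheres $R_a$ for distinct $a$ are disjoint, because the marked points are distinct and the original sections are pairwise disjoint.

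For the square, we use the additivity of relative normal Euler numbers under the sum, as in the proof of \Cref{prop:minus-two}. Each $E_a$ has normal Euler number $-1$, and the marked gluings respect the chosen framings, so
\[
R_a^2=\sum_{i=1}^4 (E_a^{(i)})^2=-4.
\]
As a check, we can instead iterate \Cref{prop:minus-two}. The first sum gives a $(-2)$-sphere that still meets a second regular fiber of $Y_2$ once. Summing with a further copy adds $-1$, and repeating twice gives $-4$.

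The main obstacle is making the form standard near two fibers of a middle summand while keeping both section points horizontal and the framings compatible with the gluing maps. We handle this by treating each fiber locally, using Gompf's normal-form lemma in disjoint neighborhoods. The framing-matching is then exactly the fiber-diffeomorphism step of \Cref{prop:minus-two}, applied at each of the three necks.
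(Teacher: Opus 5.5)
Your proposal is correct and follows essentially the same route as the paper: you cut each section into end disks and middle annuli along the summing fibers, put the form in split product charts at the section points, glue by Gompf's sum with marked fiber maps, and add relative Euler numbers to get $R_a^2=-4$. One minor point: the single transverse intersection of $E_a$ with each regular fiber follows from $E_a$ being a section, not from $E_a\cdot F=1$ alone, but this does not affect the argument.
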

\begin{proof}
For each $a$, delete one summing-fiber disk from the section in each end summand and two such disks from the section in each middle summand.  The end pieces are disks and the middle pieces are annuli.  Their boundary-connected chain is a sphere.  Choose product coordinates near every section-fiber intersection in which the fiber and section are the two coordinate factors and the symplectic form is split.  Gompf's fiber-sum construction permits the normal-circle maps to identify these models while preserving the marked section framings \cite{Gompf}.  The section pieces therefore glue symplectically, and their relative Euler numbers add to
\[
 R_a^2=\sum_{i=1}^4(E_a^{(i)})^2=-4.
\]
Distinct labels use disjoint section pieces, so the $R_a$ are pairwise disjoint.
\end{proof}
Figure~\ref{fig:fourfold} displays the chain of four section pieces and the three summing regions.

\begin{corollary}\label{cor:symp-rb}
For every $0\le m\le4n$, symplectically rationally blowing down any $m$ of the spheres $R_a$ produces a symplectic four-manifold $Z_{4,m}(n,k)$.
\end{corollary}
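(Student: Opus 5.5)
The plan is to apply Symington's symplectic rational blowdown theorem \cite{Symington} to the configuration from \Cref{thm:symp-minus-four}, one sphere at a time. First fix the symplectic form $\omega$ on $Z_4(n,k)$ given by that theorem, so that $R_1,\ldots,R_{4n}$ are pairwise disjoint embedded $\omega$-symplectic spheres of square $-4$. Choose any $m$ of them, say $R_{a_1},\ldots,R_{a_m}$. The case $m=0$ is trivial, since $Z_{4,0}(n,k)=Z_4(n,k)$ is already symplectic.

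For $m\ge1$, use the symplectic neighborhood theorem (Weinstein). Each $R_{a_j}$ has a neighborhood $\nu R_{a_j}$ symplectomorphic to a neighborhood of the zero section in the disk bundle $V_{-4}$ of Euler number $-4$ over $S^2$, with its standard symplectic structure. The spheres are compact and pairwise disjoint, so these neighborhoods can be shrunk until they are pairwise disjoint. Each one then has convex (concave complement) boundary $L(4,1)$, carrying the standard contact structure induced from the linear plumbing model. This convexity is exactly the input Symington's theorem needs.

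Next, apply Symington's result to each neighborhood in turn. It says that a symplectic $(-4)$-sphere neighborhood $V_{-4}$ can be removed and replaced by the rational ball $B_2$, with a symplectic structure on $B_2$ that is a strong filling of the same contact $L(4,1)$. After shrinking $\nu R_{a_j}$ if needed so the collar models agree, the form on $B_2$ glues to $\omega$ on the complement. The replacements happen inside disjoint neighborhoods and each one changes the form only there. Performing them successively, or simultaneously, therefore gives a closed symplectic manifold $Z_{4,m}(n,k)$.

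The only point needing care is matching the symplectic collar near each boundary, that is, making Symington's local model fit $\omega$ near $\partial\nu R_{a_j}$. Weinstein's theorem determines $\omega$ near $R_{a_j}$ up to symplectomorphism, by area and self-intersection. Symington's construction allows the area of the sphere to be any positive value, obtained by scaling the model. So this step reduces to citing those two results rather than doing new work.
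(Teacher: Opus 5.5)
Your proposal is correct and follows the paper's proof: choose pairwise disjoint standard (Weinstein) neighborhoods of the selected $(-4)$-spheres, apply Symington's theorem in each, and use that the replacements are local and so commute. Your discussion of convexity and collar matching only spells out what the citation of Symington already covers; strictly, it is the rational-ball filling that gets rescaled, not the neighborhood that gets shrunk, but Symington's theorem absorbs this.
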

\begin{proof}
Choose pairwise disjoint standard symplectic tubular neighborhoods of the selected spheres.  Symington's theorem applies in each neighborhood, and the supported replacements commute because the neighborhoods are disjoint \cite{Symington}.
\end{proof}

\begin{figure}[t]
\centering
\resizebox{.96\textwidth}{!}{%
\begin{tikzpicture}[>=Latex]
\foreach \x/\lab/\col in {0/$Y_1$/turquoise,2.7/$Y_2$/brightblue,5.4/$Y_3$/turquoise,8.1/$Y_4$/brightblue}{
 \filldraw[fill=\col!10,draw=\col,rounded corners=8pt,very thick] (\x,0) rectangle ++(2.0,1.8);
 \node at (\x+1,1.48) {\lab};
}
\draw[section,line cap=round] (.3,.65)--(9.8,.65);
\foreach \x in {2.35,5.05,7.75}{\filldraw[fill=brightorange,draw=white,line width=.8pt] (\x+.15,.65) circle (.12);}
\foreach \x in {1,3.7,6.4,9.1}{\fill[brightred] (\x,.65) circle (.075);}
\node[note,brightred] at (5.05,-.42) {$R_a=E_a^{(1)}\#E_a^{(2)}\#E_a^{(3)}\#E_a^{(4)},\quad R_a^2=-4$};
\end{tikzpicture}}
\caption{The symplectic fourfold construction.  The end pieces are disks and the two middle pieces are annuli; together they form one symplectic sphere.}\label{fig:fourfold}
\end{figure}

\section{Geometric duals and simultaneous meridians}\label{sec:duals}
Geometric duals kill the meridians that appear when several spheres are removed.
The meridian disk cut from a dual sphere is pictured in \cref{fig:dual}.

\begin{proposition}\label{prop:dual-criterion}
Let $P_1,\ldots,P_r$ be disjoint embedded spheres in an oriented four-manifold $X$.  Suppose that for each $a$ there is an embedded sphere $D_a$ meeting $P_a$ transversely in one positive point and disjoint from every $P_b$ with $b\ne a$; equivalently, the geometric incidence numbers satisfy
\[
 |D_a\cap P_b|=\delta_{ab}.
\]
For every $I\subset\{1,\ldots,r\}$, inclusion induces
\[
 \pi_1\!\left(X\setminus\bigsqcup_{a\in I}\Int\nu(P_a)\right)\cong\pi_1(X).
\]
If the $P_a$ have square $-4$, rational blowdown along any such subcollection also preserves $\pi_1$.
\end{proposition}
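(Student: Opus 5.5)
The plan is a Seifert--van Kampen argument applied one sphere at a time, followed by a second Seifert--van Kampen argument for the rational ball. Fix $I$ and write
\[
 X_I=X\setminus\bigsqcup_{a\in I}\Int\nu(P_a).
\]
Then $X$ is recovered from $X_I$ by regluing the disk bundles $\nu(P_a)$ along the circle bundles $\partial\nu(P_a)$. Each $\nu(P_a)$ is a $D^2$-bundle over $S^2$, so it is simply connected. Each $\partial\nu(P_a)$ has $\pi_1$ generated by the image of the base and the fiber circle. Because the base $S^2$ is simply connected, the image of $\pi_1(\partial\nu(P_a))$ in $\pi_1(X_I)$ is normally generated by the meridian $\mu_a$ (the circle is connected and the normal closure is basepoint-independent). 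Seifert--van Kampen applied to all $a\in I$ then gives
\[
 \pi_1(X)\cong\pi_1(X_I)\big/\normal{\mu_a : a\in I},
\]
and the map $\pi_1(X_I)\to\pi_1(X)$ induced by inclusion is the quotient map. It remains to show that every $\mu_a$ is already trivial in $\pi_1(X_I)$.

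This is where the simultaneous duals are used. By hypothesis $D_a$ meets $P_a$ transversely in one point and is disjoint from every $P_b$ with $b\ne a$. After an isotopy of the tubular neighborhoods, $D_a\cap\nu(P_a)$ is a single normal disk fiber, and $D_a\cap\nu(P_b)=\emptyset$ for $b\ne a$. Then $D_a^\circ=D_a\setminus\Int\nu(P_a)$ is an embedded disk in $X_I$ whose boundary is a meridian circle of $P_a$. Hence $\mu_a$ is null-homotopic in $X_I$, using a path to the basepoint that the normal-closure remark makes irrelevant. This holds for every $a\in I$ simultaneously, which is exactly why disjointness from the other $P_b$ matters. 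Without it, $D_a^\circ$ would be punctured by the other removed neighborhoods, and its boundary relation would involve the $\mu_b$. Therefore the quotient is trivial and inclusion induces $\pi_1(X_I)\cong\pi_1(X)$.

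For the rational blowdown, glue a copy of $B_2$ to $X_I$ along each lens-space boundary $L(4,1)=\partial\nu(P_a)$. Since $\pi_1(B_2)=\mathbb Z/2$ and $\pi_1(L(4,1))\to\pi_1(B_2)$ is surjective, Seifert--van Kampen shows that the blowdown has fundamental group equal to $\pi_1(X_I)$ modulo the normal closure of the images of $\pi_1(L(4,1))$. The cyclic group $\pi_1(L(4,1))$ is generated by the meridian $\mu_a$, which we have just shown is trivial in $\pi_1(X_I)$. So nothing new is killed, and the blowdown has fundamental group $\pi_1(X_I)\cong\pi_1(X)$.

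The main obstacle is the local step: checking that $\pi_1(\partial\nu(P_a))$ is normally generated by the meridian, and that the pushed-off dual disk really bounds that meridian after shrinking the neighborhoods. The first fact is the homotopy exact sequence of the circle bundle over $S^2$ (the fiber surjects). The second fact comes from choosing $\nu(P_a)$ small enough that the transverse point has a standard local model.
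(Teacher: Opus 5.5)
Your proof is correct and follows the paper's own argument, written out in more detail: van Kampen identifies the kernel of $\pi_1(X_I)\to\pi_1(X)$ with the normal closure of the meridians, the punctured duals $D_a\setminus\Int\nu(P_a)$ kill all selected meridians at once, and surjectivity of $\mathbb Z/4\to\mathbb Z/2$ handles the gluing of $B_2$. One correction: gluing $B_2$ in general quotients $\pi_1(X_I)$ by the normal closure of $\mu_a^2$, the image of $\ker(\pi_1(L(4,1))\to\pi_1(B_2))$ (cf.\ Proposition~\ref{prop:meridian-quotient}), not by the whole image of $\pi_1(L(4,1))$; since $\mu_a=1$ in $\pi_1(X_I)$, your conclusion is unaffected.
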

\begin{proof}
Removing a normal disk to $P_a$ from $D_a$ leaves a disk in the simultaneous complement whose boundary is a meridian $\mu_a$.  Thus every selected meridian is null-homotopic.  The map from the complement group onto $\pi_1(X)$ has kernel normally generated by these meridians, proving the first assertion.  For rational blowdown, the boundary map
$\pi_1(L(4,1))=\mathbb Z/4\to\pi_1(B_2)=\mathbb Z/2$ is surjective.  Van Kampen, applied one component at a time, then shows that the group is unchanged.
\end{proof}

\begin{figure}[t]
\centering
\resizebox{.82\textwidth}{!}{%
\begin{tikzpicture}[>=Latex]
\filldraw[fill=brightred!6,draw=brightred,very thick] (0,0) circle (1.02);
\filldraw[fill=brightred!3,draw=brightred,dashed,thick] (3.2,0) circle (.76);
\filldraw[fill=brightred!3,draw=brightred,dashed,thick] (5.7,0) circle (.76);
\draw[dual,line cap=round] (-1.45,.48)..controls(-.55,.48) and (.50,.48)..(1.55,1.18);
\filldraw[fill=brightorange,draw=white,line width=.8pt] (-.88,.48) circle (.10);
\node[brightred] at (0,-1.38) {$P_a$};
\node[brightred] at (3.2,-1.1) {$P_b$};
\node[brightred] at (5.7,-1.1) {$P_c$};
\node[brightgreen!70!black] at (1.55,1.28) {$D_a$};
\draw[brightviolet,very thick,->] (-.96,.12) arc (210:500:.34);
\node[brightviolet] at (-1.52,.93) {$\mu_a$};
\node[note] at (3.65,1.45) {$|D_a\cap P_b|=\delta_{ab}$};
\end{tikzpicture}}
\caption{A geometric dual kills the meridian in the simultaneous complement.  The dual spheres need not be mutually disjoint.}\label{fig:dual}
\end{figure}

\section{Knot-surgery doubles and branch duals}
Let $K\subset S^3$ be a fibered knot of genus $h\ge1$.  Fintushel and Stern's knot-surgered elliptic surface $E(n)_K$ admits a genus-$2h+n-1$ Lefschetz fibration \cite{FS1998,FS2004}.  Yun subsequently analyzed the monodromy of the associated twisted sums \cite{YunTwisted}.  Only the geometry of this fibration, rather than a particular monodromy word, is used below.

\begin{lemma}\label{lem:resolved-duals}
In the resolved double-cover model of $E(n)$, the $2n$ vertical branch components lift to pairwise disjoint symplectic $(-2)$-sphere sections $Q_a$.  They admit mutually disjoint symplectic $(-2)$-spheres $A_a$ such that $|A_a\cap Q_b|=\delta_{ab}$, with the unique intersection positive and transverse.  The configuration is disjoint from a fixed regular horizontal fiber and from a suitable regular elliptic fiber used for knot surgery.
\end{lemma}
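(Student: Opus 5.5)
The plan is to work directly in the standard branched double-cover model of $E(n)$.  Write $E(n)$ as the resolution of the double cover of $\CP^1\times\CP^1$ branched over a curve $B$ of bidegree $(4,2n)$.  Take $B$ to be the union of $4$ horizontal lines $\{p_i\}\times\CP^1$ and $2n$ vertical lines $\CP^1\times\{q_a\}$.  Projection to the second factor gives the elliptic fibration: its fiber is the double cover of $\CP^1$ branched at four points.  Projection to the first factor gives the horizontal fibration by double covers of $\CP^1$ branched at $2n$ points, of genus $n-1$.

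The $4\cdot 2n=8n$ transverse double points of $B$ are $A_1$ singularities of the cover.  Resolving each one produces a $(-2)$-sphere.  The vertical line $\CP^1\times\{q_a\}$ lies in the branch locus, so its preimage is a curve mapping isomorphically to it, hence a sphere.  Its self-intersection is computed as follows.  In the singular cover its pullback satisfies $\pi^*L_a=2\widetilde L_a$, so $\widetilde L_a^2=L_a^2\cdot 2/4=0$.  Resolving the four nodes on $L_a$ then subtracts $1/2$ at each node, giving $Q_a^2=-2$.  I would check this with the standard formula: the proper transform of a branch component $C$ has square $\tfrac12(C^2-\#\text{nodes on }C)=\tfrac12(0-4)=-2$.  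The same count gives $(-2n)$-spheres for the horizontal branch components, which are not needed here.  Each $Q_a$ meets every horizontal fiber exactly once, since $L_a$ is a section of the first projection, so the $Q_a$ are sections.  They are pairwise disjoint because distinct vertical lines are disjoint.

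For the duals, fix the horizontal branch line $\{p_1\}\times\CP^1$.  Let $A_a$ be the exceptional $(-2)$-sphere over the node $(p_1,q_a)$.  Then $A_a$ meets the proper transform $Q_a$ transversely in exactly one point.  This holds because in the local model $xy=z^2$ the two branch branches lift to curves each meeting the exceptional curve once.  The sphere $A_a$ is disjoint from $Q_b$ for $b\neq a$, since those curves do not pass through the node $(p_1,q_a)$.  The $A_a$ are mutually disjoint because they lie over distinct nodes.

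For the symplectic assertions, all curves involved are complex curves in a Kähler surface.  I would state $|A_a\cap Q_b|=\delta_{ab}$ with a positive transverse intersection, which is automatic for distinct complex curves meeting with local intersection number $1$.

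For the disjointness statement, choose the regular horizontal fiber over a point $p\notin\{p_1,\dots,p_4\}$.  The $A_a$ lie over $p_1$, so they miss it.  It does meet each $Q_a$, which is necessary since the $Q_a$ are sections.  I would therefore read ``disjoint from a fixed regular horizontal fiber'' as referring to the duals $A_a$ together with a neighborhood of the nodes, while the $Q_a$ meet it transversely once each.  Similarly, choose the regular elliptic fiber over $q\notin\{q_a\}$.  It misses every $Q_a$ and every $A_a$, since both lie over the points $q_a$.  This fiber contains a torus with a cusp neighborhood, along which knot surgery is performed away from the configuration.

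The main obstacle is the local analysis at the $A_1$ points.  I need to confirm that the proper transforms of the two branch branches are disjoint in the resolution and that each meets the exceptional sphere transversely once.  This determines the square $-2$ of $Q_a$ and the intersection number $1$.  I would verify it in coordinates: the map $(u,v)\mapsto (u^2,v^2)$ realizes the cover $w^2=xy$ as $\mathbb C^2/\pm1$, the resolution is $T^*S^2$, and the two axes lift to two distinct cotangent fibers.
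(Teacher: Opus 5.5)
Your proposal is correct and follows essentially the paper's argument. It uses the same model with four branch lines in one ruling and $2n$ in the other, the lifts of the $2n$ lines as $(-2)$-sections, and the exceptional spheres over the nodes on one fixed line of the other family as simultaneous duals. Resolving the $A_1$ points of the singular cover gives the same manifold as the paper's blow-up of the nodes followed by the smooth double cover. Your $\tfrac12$-corrections and local $\mathbb{C}^2/\pm1$ model replace the paper's projection-formula computations.

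Your observation that only the $A_a$, and not the sections $Q_a$, can miss a horizontal fiber matches what the paper's proof actually establishes. The one slip is in a side remark: the lifts of the four other branch lines have square $\tfrac12(0-2n)=-n$ (the usual sections of $E(n)$), not $-2n$.
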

\begin{proof}
Write $Q=\CP^1_x\times\CP^1_y$, and take a branch divisor consisting of four horizontal spheres $H_i=\CP^1_x\times\{q_i\}$ and $2n$ vertical spheres $V_a=\{p_a\}\times\CP^1_y$.  This is the elliptic-surface model, with $8n$ nodes; it is distinct from the two-horizontal-component Gurtas model in the appendix.  Blow up every node $H_i\cap V_a$, and denote the exceptional sphere by $e_{ia}$.  The strict transform $V_a'$ has square $-4$.  In the smooth double cover $\pi\colon\widetilde E(n)\to\widetilde Q$ branched over the disjoint strict transforms, its ramification lift $Q_a=\pi^{-1}(V_a')$ maps with degree one to $V_a'$.  Since $\pi^*[V_a']=2[Q_a]$, the projection formula gives
\[
 2(V_a')^2=(2Q_a)^2,\qquad Q_a^2=-2.
\]
Projection to the $y$-factor makes $Q_a$ a section of the genus-$(n-1)$ horizontal fibration.

The divisor $e_{ia}$ is not a branch component.  The covering restricted to $e_{ia}$ is branched at its two intersections with the transformed branch divisor.  The Riemann-Hurwitz formula identifies its inverse image with a sphere $A_{ia}$, and $A_{ia}^2=2e_{ia}^2=-2$.  We choose the spheres over the nodes on $H_1$ and write $A_a=A_{1a}$.  Since $\pi^*[e_{1a}]=[A_a]$ and $\pi^*[V_b']=2[Q_b]$, the projection formula gives
\[
 2A_a\cdot Q_b=\pi^*[e_{1a}]\cdot\pi^*[V_b']=2(e_{1a}\cdot V_b')=2\delta_{ab}.
\]
Holomorphicity makes the unique intersection positive and transverse.  Exceptional divisors belonging to distinct nodes are disjoint, so the $A_a$ are mutually disjoint.  Hence
\begin{equation}\label{eq:branch-duals}
 |A_a\cap Q_b|=\delta_{ab},\qquad 1\le a,b\le2n.
\end{equation}
The complex double cover supplies a K\"ahler form for which all $Q_a$ and $A_a$ are simultaneously symplectic.  In the resolved model each $A_a$ is contained in the fiber over $q_1$, and hence is disjoint from a fixed regular fiber $F$ over a point outside $\{q_1,\ldots,q_4\}$.  Choose the knot-surgery torus as a regular elliptic fiber of the other projection, over $p_0\notin\{p_1,\ldots,p_{2n}\}$ and away from the resolution neighborhoods.  It is disjoint from every $Q_a$ and $A_a$.  The knot-surgery form agrees with the original form outside this neighborhood, so the untouched configuration remains symplectic.

The local deformation of the four special fibers leaves the chosen regular fiber and the embedded section-dual configuration unchanged.  Thus the $Q_a$ remain sections of the genus-$(2h+n-1)$ fibration, with the $A_a$ retaining the incidences above.  The same persistence argument appears for one section-dual pair in \cite[Lemma~2.2]{AEMS}.
\end{proof}
The branch components, nodes, and the selected resolution spheres are indicated in \cref{fig:branch-duals}.

\begin{proposition}\label{prop:knot-sections}
For $n\ge2$, the fibration on $E(n)_K$ contains $2n$ pairwise disjoint symplectic $(-2)$-sections $Q_a$ with geometric dual spheres $A_a$ satisfying \eqref{eq:branch-duals}.  Moreover, $\pi_1(E(n)_K)=1$.
\end{proposition}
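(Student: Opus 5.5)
The first assertion is essentially a repackaging of \Cref{lem:resolved-duals}, so I will begin by invoking that lemma. For $n\ge2$ the resolved double cover of $\CP^1\times\CP^1$, branched over four horizontal and $2n$ vertical spheres, is $E(n)$. The lemma supplies $2n$ pairwise disjoint symplectic $(-2)$-spheres $Q_a$, which are sections of the horizontal genus-$(n-1)$ fibration, together with mutually disjoint symplectic $(-2)$-spheres $A_a$ satisfying \eqref{eq:branch-duals}. The lemma also places the whole configuration away from a chosen regular elliptic fiber $T$ over $p_0$ and away from a regular horizontal fiber. Performing Fintushel–Stern knot surgery along $T$ with the fibered knot $K$ changes $E(n)$ only inside $\nu T$. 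The resulting genus-$(2h+n-1)$ Lefschetz fibration on $E(n)_K$ agrees with the old horizontal fibration away from $\nu T$; the only modification is that each horizontal fiber has its intersections with $T$ replaced by copies of the fiber surface of $K$. Consequently each $Q_a$, which lies in the complement of $\nu T$ and meets every horizontal fiber once, remains a section of the new fibration. Its self-intersection stays $-2$, and the incidences $|A_a\cap Q_b|=\delta_{ab}$ are unchanged. Symplecticity survives because the knot-surgery symplectic form can be chosen to agree with the Kähler form outside $\nu T$, which is exactly the persistence statement recorded at the end of \Cref{lem:resolved-duals}.

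For $\pi_1(E(n)_K)=1$ I would use the standard Fintushel–Stern argument. The complement $E(n)\setminus\nu T$ is simply connected, because $E(n)$ contains a cusp fiber, or alternatively because $T$ has a $(-n)$-section (a geometric dual sphere). I would make this concrete in the branched-cover model: a section sphere of the elliptic fibration meets $T$ exactly once, so the meridian of $T$ bounds a disk in the complement. Van Kampen then shows that $\pi_1(E(n)_K)$ is a quotient of $\pi_1(S^1\times(S^3\setminus\nu K))$ in which the meridian of $K$ is identified with the meridian of $T$. The meridian of $K$ normally generates the knot group, and the $S^1$-factor is likewise killed because it is identified with a loop in $T$ that bounds a vanishing-cycle disk in the complement. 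Hence the group is trivial.

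The step I expect to require the most care is justifying that the $Q_a$ are genuine sections of the knot-surgery Lefschetz fibration, rather than merely surfaces in the manifold. This needs the Fintushel–Stern description of the fibration on $E(n)_K$ as obtained from the horizontal fibration by fiberwise surgery supported in $\nu T$, with the fibration unchanged outside that neighborhood. I would cite \cite{FS2004} together with \cite[Lemma~2.2]{AEMS}, which checks the same persistence for one section–dual pair.
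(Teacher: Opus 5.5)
Your proposal is correct and follows essentially the same route as the paper. The sections and duals come from \Cref{lem:resolved-duals}, whose persistence argument you simply spell out. Simple connectivity comes from the simply connected complement of a regular elliptic fiber, van Kampen, and the fact that $\mu_K$ normally generates the knot group.

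One slip: in the Fintushel--Stern gluing the meridian of $T$ is matched with the longitude of $K$, not with $\mu_K$. The meridian $\mu_K$ and the $S^1$ factor go to curves on (a parallel copy of) $T$, which are killed by vanishing-cycle disks. Since you have already established that $E(n)\setminus\nu T$ is simply connected, every boundary curve dies, so your conclusion is unaffected.
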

\begin{proof}
Lemma~\ref{lem:resolved-duals} supplies the sections $Q_a$ and their duals $A_a$.  For the fundamental group, use the regular elliptic fiber with simply connected complement in $E(n)$.  Under the Fintushel-Stern boundary identification, the external $S^1$ and a knot meridian map trivially.  The meridian normally generates the knot group, and van Kampen gives $\pi_1(E(n)_K)=1$ \cite{FS1998}.
\end{proof}

Put $g_K=2h+n-1$, the fiber genus of the Lefschetz fibration on $E(n)_K$.  If
$I\subsetneq\{1,\ldots,2n\}$ is nonempty, define
\[
 W_{K,I}=E(n)_K\setminus\Int\nu\!\left(F\cup\bigcup_{a\in I}Q_a\right).
\]
Here $\nu(\cdot)$ denotes a closed regular neighborhood of the indicated normal-crossing union.

\begin{proposition}\label{prop:multiple-complements}
Let $n\ge2$ and let $K$ be a fibered knot of genus $h\ge1$.  For every nonempty proper subset
$I\subsetneq\{1,\ldots,2n\}$, the compact manifold $W_{K,I}$ is simply connected.
\end{proposition}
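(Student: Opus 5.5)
The plan is to compute $\pi_1(W_{K,I})$ in two stages: first remove the sections $Q_a$, $a\in I$, then remove the regular fiber $F$.

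\emph{Stage one.} By Proposition~\ref{prop:knot-sections}, $\pi_1(E(n)_K)=1$. The duals $A_a$ of Lemma~\ref{lem:resolved-duals} satisfy $|A_a\cap Q_b|=\delta_{ab}$ by \eqref{eq:branch-duals}. Proposition~\ref{prop:dual-criterion} then gives
\[
 \pi_1\Bigl(E(n)_K\setminus\textstyle\bigsqcup_{a\in I}\Int\nu(Q_a)\Bigr)=1 .
\]

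\emph{Stage two.} Set $X_I=E(n)_K\setminus\bigsqcup_{a\in I}\Int\nu(Q_a)$. Removing $\nu(F)$ from $X_I$ adds at most one normal generator, the meridian $\mu_F$ of $F$. By van Kampen, $\pi_1(W_{K,I})$ modulo $\normal{\mu_F}$ is trivial. So the task is to show that $\mu_F$ is itself trivial in $\pi_1(W_{K,I})$. Two difficulties arise here. First, every $Q_a$ is a section and meets $F$, so $F$ and $\nu(Q_a)$ overlap near the points $F\cap Q_a$. Second, $F$ itself has no sphere dual in the complement.

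The intended route uses a section $Q_b$ with $b\notin I$, which exists because $I$ is a proper subset. This is exactly where the hypothesis $I\ne\{1,\ldots,2n\}$ enters. The section $Q_b$ meets $F$ once and is disjoint from every $Q_a$ with $a\in I$. Removing a disk neighborhood of $Q_b\cap F$ from $Q_b$ leaves a disk in $W_{K,I}$ whose boundary is $\mu_F$. Hence $\mu_F$ bounds a disk in $W_{K,I}$ and is null-homotopic.

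For a rigorous version, I would decompose $W_{K,I}$ as $E(n)_K\setminus\Int\nu(F\cup\bigcup_{a\in I} Q_a)$. One piece is the complement of the neighborhood of the plumbing-type union, and it is glued to the rest along the boundary of that neighborhood. In this decomposition, the meridians of the $Q_a$ ($a\in I$) are killed by the punctured duals $A_a$. Each $A_a$ lies in the fiber over $q_1$, so it is disjoint from $F$ and survives in $W_{K,I}$. The meridian of $F$ is killed by the punctured section $Q_b$.

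Finally, any element of $\pi_1(W_{K,I})$ maps to the trivial group $\pi_1(E(n)_K)=1$. Its kernel is normally generated by the meridians of the removed components $F$ and $Q_a$, $a\in I$. The boundary torus pieces near the points $F\cap Q_a$ contribute only commutators of those meridians, so every generator is trivial.

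I expect the main obstacle to be justifying the claim that the kernel is normally generated by the meridians when the removed set is a normal-crossing union rather than a disjoint union. My first approach is to note that $\nu(F\cup\bigcup_{a\in I} Q_a)$ is a plumbing. Its complement's inclusion into $E(n)_K$ has kernel normally generated by meridians of the components, by general position: a null-homotopy in $E(n)_K$ can be made transverse to the $2$-complex. A homotopy transverse to the surfaces crosses them at isolated points, and each crossing contributes a conjugate of a meridian. Since every such meridian is killed by a disk disjoint from the other removed pieces, $\pi_1(W_{K,I})=1$.
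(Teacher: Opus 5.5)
Your proposal is correct and is essentially the paper's argument. Since $\pi_1(E(n)_K)=1$, the meridians of $F$ and of the $Q_a$, $a\in I$, normally generate $\pi_1(W_{K,I})$. Each punctured dual $A_a\setminus\Int\nu Q_a$, which misses $F$, kills the $Q_a$-meridian, and a punctured unused section $Q_b\setminus\Int\nu F$ with $b\notin I$ kills the meridian of $F$. Your two-stage detour through Proposition~\ref{prop:dual-criterion} and your transversality justification of normal generation only add detail to this same route.
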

\begin{proof}
Restoring the regular neighborhood kills the meridians of its components.  As $E(n)_K$ is simply connected, these meridians normally generate $\pi_1(W_{K,I})$.  For $a\in I$, the punctured sphere $A_a\setminus\Int\nu Q_a$ is a meridian disk for $Q_a$ in $W_{K,I}$.  Because $I$ is proper, choose $b\notin I$.  Likewise, $Q_b\setminus\Int\nu F$ is a meridian disk for $F$ and misses the selected sections.  Both types of meridians are null-homotopic, proving $\pi_1(W_{K,I})=1$.
\end{proof}

When $I=\{a\}$, denote the complement by $W_{K,a}=E(n)_K\setminus\Int\nu(F\cup Q_a)$.

\begin{corollary}\label{cor:stein-complement}
The manifold $W_{K,a}$ is a simply connected Stein domain.  Its boundary contact structure is supported by
\[
 (\Sigma_{g_K}^1,t_\delta^2),\qquad g_K=2h+n-1.
\]
For fixed $(n,h)$ its contactomorphism type is independent of $K$ and $a$.
\end{corollary}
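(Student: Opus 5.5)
The plan is to apply the Akhmedov–Etnyre–Mark–Smith construction to the fibration on $E(n)_K$. The input is the single section $Q_a$, which by \Cref{prop:knot-sections} is a symplectic $(-2)$-section, together with a regular fiber $F$ disjoint from it (\Cref{lem:resolved-duals}). Its output is that $W_{K,a}$, the complement of a regular neighborhood of $F\cup Q_a$, is a Stein domain. First I will recall the mechanism \cite[Lemma~4.1]{AEMS}. Removing $\nu F$ leaves a Lefschetz fibration over the disk with closed fibers of genus $g_K$. Removing $\nu Q_a$ then punctures every fiber once, giving a Lefschetz fibration over $D^2$ with fiber $\Sigma_{g_K}^1$ and vanishing cycles the lifts $\widetilde a_i$. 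Each lift is homotopically nontrivial in the punctured fiber, because its image is already nontrivial in the closed fiber. A Lefschetz fibration over $D^2$ with bounded fibers and nonseparating (or at least essential) vanishing cycles is a Stein domain via Loi–Piergallini / Akbulut–Ozbagci. I will check that the relevant hypothesis is essentiality of the vanishing cycles in $\Sigma_{g_K}^1$, which holds because the fibration is relatively minimal.

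Next, for the open book, the boundary of this Lefschetz fibration over $D^2$ carries the open book with page $\Sigma_{g_K}^1$ and monodromy $t_{\widetilde a_N}\cdots t_{\widetilde a_1}$, and this open book supports the induced contact structure. By \Cref{prop:correspondence} with $r=1$ and $m_1=-Q_a^2=2$, this product equals $t_\delta^2$ in $\Mod(\Sigma_{g_K}^1)$. So the page and monodromy are $(\Sigma_{g_K}^1,t_\delta^2)$, as claimed.

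For simple connectivity, the case $I=\{a\}$ of \Cref{prop:multiple-complements} already gives $\pi_1(W_{K,a})=1$, since $\{a\}$ is a proper subset when $n\ge2$.

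Independence of $K$ and $a$ then follows: the supporting open book $(\Sigma_{g_K}^1,t_\delta^2)$ depends only on $g_K=2h+n-1$. By Giroux's correspondence, isomorphic open books give contactomorphic supported contact structures. The boundary-parallel twist is canonical up to orientation-preserving diffeomorphism of the page, so any identification of pages carries one open book to the other.

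I expect the main obstacle to be making sure that the vanishing cycles, lifted after removing the section, are the ones appearing in \Cref{prop:correspondence}. This requires the boundary relation for a $(-2)$-section of $E(n)_K$ and not merely for the Gurtas sections. I would handle it by noting that \Cref{prop:correspondence} is stated for an arbitrary Lefschetz fibration with disjoint sections, so it applies verbatim. The remaining subtlety is that $Q_a$ be a genuine section, i.e. transverse to every fiber including the singular ones and avoiding the critical points. This is supplied by \Cref{lem:resolved-duals}, which keeps $Q_a$ a section after the knot-surgery deformation.
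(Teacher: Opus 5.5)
Your outline follows the paper's proof. The paper cites \cite[Lemma~4.1]{AEMS} for the Stein structure and the boundary open book, and \Cref{prop:multiple-complements} with $I=\{a\}$ for simple connectivity. Your open-book computation via \Cref{prop:correspondence}, the $\pi_1$ step, and the independence argument are fine.

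The gap is in how you justify the Stein structure yourself. The Loi--Piergallini/Akbulut--Ozbagci theorem requires an \emph{allowable} fibration: every vanishing cycle must be homologically nontrivial in the bounded fiber $\Sigma_{g_K}^1$, not merely homotopically essential. Each Weinstein $2$-handle is attached along a Legendrian realization of a vanishing cycle on a page. A separating curve in a surface with one boundary component is isolating, since one side misses $\partial\Sigma_{g_K}^1$, so Legendrian realization is not available for it. Relative minimality only excludes null-homotopic vanishing cycles and says nothing about separating ones. Separating vanishing cycles do occur in fibrations of this kind: two of the eight vanishing cycles of Matsumoto's genus-two fibration are separating. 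So your claim that ``the relevant hypothesis is essentiality \dots which holds because the fibration is relatively minimal'' is false as a general principle, and it leaves the Stein assertion unproved.

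To close the gap, use the fact that $\Sigma_{g_K}^1$ has one boundary component. There, a lifted vanishing cycle is homologically nontrivial exactly when it is nonseparating. It is nonseparating in $\Sigma_{g_K}^1$ exactly when the original cycle is nonseparating in $\Sigma_{g_K}$, because removing a disk from one side of a curve does not change whether the curve separates. You therefore have two options. You can check that all vanishing cycles of the Fintushel--Stern fibration on $E(n)_K$ are nonseparating; this is a property of that specific fibration, not a consequence of relative minimality. Alternatively, do what the paper does: take the Stein structure and the identification of the boundary contact structure directly from \cite[Lemma~4.1]{AEMS}, applied to $F$ and $Q_a$. The paper itself names allowability as the relevant condition in \Cref{rem:stein-exotic}.
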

\begin{proof}
Apply \cite[Lemma~4.1]{AEMS} to the fiber $F$ and the $(-2)$-section $Q_a$.  It gives a Stein structure on $W_{K,a}$ and identifies the induced contact structure with the one supported by $(\Sigma_{g_K}^1,t_\delta^2)$.  Simple connectivity is \Cref{prop:multiple-complements} with $I=\{a\}$.
\end{proof}
Figure~\ref{fig:stein-complement} summarizes the removal and the boundary open book.

\begin{figure}[H]
\centering
\resizebox{.94\textwidth}{!}{%
\begin{tikzpicture}[>=Latex]
\filldraw[fill=brightblue!8,draw=brightblue,rounded corners=8pt,very thick]
  (0,0) rectangle (3.0,2.05);
\node[font=\bfseries] at (1.5,1.68) {$E(n)_K$};
\draw[fiber] (.45,1.03)--(2.55,1.03);
\foreach \x/\c in {.78/brightred,1.22/brightgreen,1.66/brightviolet,2.10/turquoise}
  \draw[\c,very thick] (\x,.40)--(\x,1.45);
\node[font=\small] at (1.5,.30) {$F,\ Q_a$};
\draw[sumarrow] (3.35,1.03)--(4.75,1.03);
\node[font=\small] at (4.05,1.42) {$\nu(F\cup Q_a)$};
\filldraw[fill=brightgreen!9,draw=brightgreen!70!black,rounded corners=8pt,very thick]
  (5.05,0) rectangle (8.10,2.05);
\node[font=\bfseries] at (6.575,1.48) {$W_{K,a}$};
\node at (6.575,.93) {simply connected};
\node[brightgreen!60!black] at (6.575,.43) {Stein domain};
\draw[sumarrow] (8.45,1.03)--(9.85,1.03);
\filldraw[fill=brightviolet!8,draw=brightviolet,rounded corners=8pt,very thick]
  (10.15,.10) rectangle (13.45,1.95);
\node[font=\bfseries] at (11.80,1.48) {$\partial W_{K,a}$};
\node at (11.80,.92) {$\Sigma_{g_K}^1$ page};
\node at (11.80,.43) {$t_\delta^2$};
\end{tikzpicture}}
\caption{The one-section Stein-complement construction of Akhmedov-Etnyre-Mark-Smith.  The boundary contact structure is supported by the displayed open book.}\label{fig:stein-complement}
\end{figure}

\begin{remark}\label{rem:stein-exotic}
A regular neighborhood of $F\cup Q_a$ is the fiber-section plumbing $X_{g_K,2}$ used in \cite{AEMS}.  For fixed $n$ and $h>1$, varying $K$ among fibered knots of genus $h$ with distinct Alexander polynomials gives infinitely many homeomorphic, pairwise nondiffeomorphic, simply connected Stein fillings of the contact manifold supported by $(\Sigma_{g_K}^1,t_\delta^2)$.  The related construction of Akhmedov-Ozbagci realizes arbitrary finitely presentable fundamental groups \cite{AO}.  For $|I|>1$, \Cref{prop:multiple-complements} establishes only simple connectivity; a Stein analogue would also require allowability and an extension theorem for the larger fiber-section plumbing.
\end{remark}

Let $F$ denote a regular genus-$(2h+n-1)$ fiber and form
\[
 D_K(n)=E(n)_K\#_F E(n)_K
\]
by a gluing that matches the sections of \Cref{prop:knot-sections}.

\begin{theorem}\label{thm:knot-double}
The manifold $D_K(n)$ is simply connected and contains $2n$ pairwise disjoint symplectic spheres
\[
 P_a=Q_a^{(1)}\#Q_a^{(2)},\qquad P_a^2=-4,
\]
with geometric dual spheres $A_a^{(1)}$.  Therefore the simultaneous complement of any subcollection of the $P_a$ is simply connected, and rational blowdown along that subcollection preserves simple connectivity.
\end{theorem}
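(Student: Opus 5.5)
The plan is to follow the template of \Cref{thm:symp-minus-four}, with two summands instead of four, and then to feed the duals of \Cref{lem:resolved-duals} into \Cref{prop:dual-criterion}.

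\textbf{Step 1: construct the spheres $P_a$.} Fix the regular horizontal fiber $F$ from \Cref{lem:resolved-duals}. This fiber is disjoint from all the $A_a$, and each $Q_a$ meets it transversely in one point. Choose product symplectic charts at the $2n$ points $Q_a\cap F$, as in the proof of \Cref{prop:minus-two}. Use a marked fiber diffeomorphism $F^{(1)}\to F^{(2)}$ that sends the point of $Q_a^{(1)}$ to the point of $Q_a^{(2)}$ and matches the normal framings. Gompf's symplectic sum then glues the disks $(Q_a^{(1)})^\circ$ and $(Q_a^{(2)})^\circ$ symplectically to a sphere $P_a$. Relative Euler numbers add, so $P_a^2=-2-2=-4$. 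Distinct labels use disjoint pieces, so the $P_a$ are pairwise disjoint.

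\textbf{Step 2: check the duals.} The sphere $A_a^{(1)}$ lies in $E(n)_K^{(1)}\setminus\nu F$ because it misses $F$. It therefore survives untouched in $D_K(n)$ and stays symplectic, since the sum form agrees with the original form away from the gluing region. In this region the only pieces of the $P_b$ are the $Q_b^{(1)}$, so \eqref{eq:branch-duals} gives $|A_a^{(1)}\cap P_b|=|A_a^{(1)}\cap Q_b^{(1)}|=\delta_{ab}$, with a positive transverse intersection. This is exactly the simultaneous dual system required by \Cref{prop:dual-criterion}.

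\textbf{Step 3: show $\pi_1(D_K(n))=1$.} By van Kampen, $\pi_1(D_K(n))$ is a quotient of the amalgam of the two groups $\pi_1(E(n)_K\setminus\nu F)$ over $\pi_1(F\times S^1)$. The fiber complement in each summand is simply connected. To see this, use \Cref{prop:multiple-complements} with $I$ chosen nonempty and proper, for instance $I=\{1\}$. This shows $W_{K,\{1\}}$ is simply connected. Regluing $\nu Q_1$ only adds a $2$-handle-type piece, so $E(n)_K\setminus\nu F$ is also simply connected. More directly, the meridian of $F$ is killed by the disk $Q_b\setminus\Int\nu F$, and the complement's group is normally generated by that meridian. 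With both sides simply connected, the amalgam is trivial. Applying \Cref{prop:dual-criterion} with $X=D_K(n)$ and $P_1,\ldots,P_{2n}$ then gives simple connectivity of every simultaneous complement and of every rational blowdown. That proposition uses the surjection $\mathbb Z/4\to\mathbb Z/2$ and van Kampen one component at a time.

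\textbf{Main obstacle.} I expect the delicate point to be Step 3's claim that $E(n)_K\setminus\nu F$ is simply connected. The risk is that the fiber-complement group could be generated by more than the meridian. I would argue this by writing $E(n)_K=(E(n)_K\setminus\nu F)\cup\nu F$ and noting two facts. First, $\nu F\cong F\times D^2$ injects the fiber classes. Second, the Lefschetz vanishing cycles kill the image of $\pi_1(F)$ in the complement; equivalently, the complement is built from $F\times D^2$ by Lefschetz $2$-handles and a section disk. Together with $\pi_1(E(n)_K)=1$ from \Cref{prop:knot-sections}, this shows that the kernel of $\pi_1(E(n)_K\setminus\nu F)\to\pi_1(E(n)_K)$ is normally generated by the meridian, and the meridian dies via $Q_b$.
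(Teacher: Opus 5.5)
Your proposal is correct and follows essentially the same route as the paper. You glue the matched punctured $(-2)$-sections into disjoint symplectic $(-4)$-spheres, and note that each $A_a^{(1)}$ misses the summing fiber, so it survives with $|A_a^{(1)}\cap P_b|=\delta_{ab}$. You then make each fiber complement simply connected by killing the fiber meridian with a punctured section, and finish with \Cref{prop:dual-criterion}. Your ``main obstacle'' paragraph is unnecessary, and its claim that the vanishing cycles kill the image of $\pi_1(F)$ holds here only because $\pi_1(E(n)_K)=1$. Your ``more directly'' meridian argument already settles the point, as does the $2$-handle argument from $W_{K,\{1\}}$.
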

\begin{proof}
Corresponding punctured sections glue to symplectic spheres of square $-4$.  By \Cref{lem:resolved-duals}, each $A_a^{(1)}$ is disjoint from the fixed regular fiber used in forming $D_K(n)$; it therefore remains a closed sphere after the sum and satisfies
\[
 |A_a^{(1)}\cap P_b|=\delta_{ab}.
\]
Van Kampen presents the complement of a regular fiber as the quotient of the fiber group by the normal closure of the vanishing cycles, together with the fiber meridian.  The punctured section kills that meridian, so each fiber complement is simply connected and $\pi_1(D_K(n))=1$.  Proposition~\ref{prop:dual-criterion} now gives simple connectivity of every simultaneous complement and of each rational blowdown.
\end{proof}

\begin{figure}[t]
\centering
\resizebox{.94\textwidth}{!}{%
\begin{tikzpicture}[>=Latex]
\filldraw[fill=turquoise!5,draw=brightblue!65,rounded corners=8pt,thick] (-.55,-.55) rectangle (6.5,2.35);
\foreach \x in {0,1.25,2.5,3.75}{\draw[section,line cap=round] (-.25,\x/2.2)--(6.2,\x/2.2);}
\foreach \x in {.5,1.5,2.5,3.5,4.5,5.5}{\draw[turquoise,very thick,densely dashed,line cap=round] (\x,-.35)--(\x,2.15);}
\foreach \x in {.5,1.5,2.5,3.5,4.5,5.5}{
 \foreach \y in {0,.568,1.136,1.704}{\fill[brightorange] (\x,\y) circle (.07);}
}
\draw[dual] (.5,-.02) circle (.20);
\draw[dual] (1.5,.55) circle (.20);
\draw[dual] (2.5,1.12) circle (.20);
\node[brightred] at (-1.15,.85) {four horizontal};
\node[turquoise] at (3.0,2.5) {$2n$ vertical branch components};
\node[brightorange,align=center] at (7.25,.85) {$8n$ marked\\nodes};
\node[brightgreen!70!black] at (4.8,-.7) {choose one resolution dual per vertical component};
\end{tikzpicture}}
\caption{Branch-cover origin of the chosen simultaneous dual system.  A vertical branch component lifts to a section; a node-resolution sphere meets that section once and misses the other vertical sections.}\label{fig:branch-duals}
\end{figure}

\section{Invariants, fundamental groups, and blowdown}
From \eqref{eq:Y},
\[
 e(Y)=4-4k+4n,\quad \sigma(Y)=-4n,
 \quad e(F)=4-4k-2n.
\]
Hence
\begin{equation}\label{eq:double-invariants}
 e(X_\varphi)=12n,\quad \sigma(X_\varphi)=-8n,
 \quad \chi_h(X_\varphi)=n,\quad c_1^2(X_\varphi)=0.
\end{equation}
Smoothly rationally blowing down $m\le2n$ of the spheres in \Cref{thm:smooth-minus-four} changes $(e,\sigma)$ by $(-m,+m)$.  No almost-complex structure is asserted for these smooth blowdowns, so $(e+\sigma)/4$ and $2e+3\sigma$ are only numerical abbreviations there.

For the fourfold sum,
\begin{align}\label{eq:fourfold-invariants}
 e(Z_4(n,k))&=8k+28n-8,& \sigma(Z_4(n,k))&=-16n,\\
 \chi_h(Z_4(n,k))&=2k+3n-2,& c_1^2(Z_4(n,k))&=16k+8n-16.
\end{align}
Consequently,
\begin{equation}\label{eq:fourfold-blowdown-invariants}
 \begin{split}
 e(Z_{4,m})&=8k+28n-8-m,\qquad \sigma(Z_{4,m})=-16n+m,\\
 \chi_h(Z_{4,m})&=2k+3n-2,\qquad c_1^2(Z_{4,m})=16k+8n-16+m.
 \end{split}
\end{equation}

For the knot-surgery double, knot surgery preserves $e$ and $\sigma$, so
\begin{equation}\label{eq:knot-invariants}
 e(D_K(n))=28n+8h-8,\qquad \sigma(D_K(n))=-16n,
\end{equation}
and
\[
 \chi_h(D_K(n))=3n+2h-2,\qquad c_1^2(D_K(n))=8n+16h-16.
\]
If $D_{K,m}(n)$ is the symplectic rational blowdown of any $m\le2n$ of the spheres $P_a$ in \Cref{thm:knot-double}, then
\begin{equation}\label{eq:knot-blowdown-invariants}
 \begin{aligned}
 e(D_{K,m}(n))&=28n+8h-8-m, &\qquad \sigma(D_{K,m}(n))&=-16n+m,\\
 b_2^+(D_{K,m}(n))&=6n+4h-5,& b_2^-(D_{K,m}(n))&=22n+4h-5-m.
 \end{aligned}
\end{equation}

Let $V_{-4}$ be the disk bundle of Euler number $-4$ and let $B_2$ be its rational-blowdown replacement.  Then $\partial V_{-4}=L(4,1)$, $\pi_1(B_2)=\mathbb Z/2$, and the boundary homomorphism $\mathbb Z/4\to\mathbb Z/2$ is reduction modulo two.
This replacement is illustrated in \cref{fig:rational-blowdown}.

\begin{proposition}\label{prop:meridian-quotient}
Let $P_1,\ldots,P_m$ be disjoint $(-4)$-spheres in a simply connected four-manifold $D$, let $U=D\setminus\nu(P_1\cup\cdots\cup P_m)$, and let $\mu_i$ be their meridians.  The rational blowdown $D_m$ satisfies
\begin{equation}\label{eq:meridian-quotient}
 \pi_1(D_m)\cong \pi_1(U)/\normal{\mu_1^2,\ldots,\mu_m^2}.
\end{equation}
In particular, $D_m$ is simply connected if the $\mu_i$ are null-homotopic in $U$; more generally it is simply connected exactly when their squares normally generate $\pi_1(U)$.
\end{proposition}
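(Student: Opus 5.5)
The plan is to compute $\pi_1(D_m)$ by Seifert--van Kampen, attaching the rational balls one at a time. Write $D_m=U\cup_{\partial}\bigl(B_2^{(1)}\sqcup\cdots\sqcup B_2^{(m)}\bigr)$, where each $B_2^{(i)}$ is glued along $\partial\nu(P_i)\cong L(4,1)$. Each boundary component is connected, so van Kampen applies at every step with basepoints joined by arcs in $U$. It gives
\[
 \pi_1(D_m)\cong \pi_1(U)*_{\ast_i\pi_1(L(4,1))}\bigl(\ast_i\pi_1(B_2^{(i)})\bigr),
\]
which is an iterated amalgamation over the $m$ boundary lens spaces.

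\textbf{Step 1: identify the generators.} The generator of $\pi_1(L(4,1))=\mathbb Z/4$ should be represented by the meridian $\mu_i$ of $P_i$. The boundary $L(4,1)$ is the circle bundle of Euler number $-4$ over $S^2$. Its fundamental group is generated by the fiber class, and the Euler class relation gives that class order $4$. The fiber class is precisely the meridian circle of $P_i$ in $U$.

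\textbf{Step 2: record what the ball does.} The map $\mathbb Z/4\to\pi_1(B_2)=\mathbb Z/2$ is reduction mod two, as recorded just before the proposition. It is surjective, with kernel generated by the square of the generator.

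\textbf{Step 3: compute the pushout.} When $A\to C$ is surjective with kernel $N$, the pushout of $A\to G$ and $A\to C$ is $G/\normal{\mathrm{im}\,N}$. The generator of $C$ is the image of an element of $A$, so $C$ contributes no new generators, and its only relations are those of $A$ together with the kernel. Applying this with $A=\mathbb Z/4$, $C=\mathbb Z/2$, $N=\langle 2\rangle$ kills $\mu_i^2$, and doing this for each $i$ gives \eqref{eq:meridian-quotient}.

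\textbf{Step 4: the consequences.} If each $\mu_i$ is null-homotopic in $U$, then by simple connectivity of $D$ and the usual van Kampen for re-gluing $\nu(P_i)$, we have $\pi_1(U)=\normal{\mu_1,\ldots,\mu_m}$. This group is trivial, so the quotient is trivial. The ``exactly when'' statement is a restatement of \eqref{eq:meridian-quotient}: the quotient is trivial if and only if the $\mu_i^2$ normally generate $\pi_1(U)$.

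\textbf{Main obstacle.} The delicate point is Step 1: checking that the boundary inclusion sends the generator of $\pi_1(L(4,1))$ to the meridian. This needs $P_i^2=-4$ and the standard circle-bundle description of $\partial V_{-4}$. The surjectivity to $\mathbb Z/2$ is needed so that no extra generator survives from $B_2$.
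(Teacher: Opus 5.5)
Your proposal is correct and follows the paper's argument: van Kampen over the lens-space boundaries, with the generator of $\pi_1(L(4,1))=\mathbb Z/4$ identified with the meridian $\mu_i$, and the surjection onto $\pi_1(B_2)=\mathbb Z/2$ imposing exactly $\mu_i^2=1$. You supply details the paper leaves implicit: the circle-bundle exact sequence identifying the meridian as the generator, and the pushout lemma for a surjective amalgamating map.
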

\begin{proof}
Restoring the disk bundles kills the meridians, so they normally generate $\pi_1(U)$.  Gluing $B_2$ instead identifies the generator of $\pi_1(L(4,1))=\mathbb Z/4$ with the generator of $\pi_1(B_2)=\mathbb Z/2$, imposing precisely $\mu_i^2=1$.  Van Kampen gives \eqref{eq:meridian-quotient}.
\end{proof}

\begin{figure}[t]
\centering
\resizebox{.88\textwidth}{!}{%
\begin{tikzpicture}[>=Latex]
\filldraw[fill=brightviolet!12,draw=brightviolet,very thick] (0,0) circle (.85);
\node at (0,0) {$V_{-4}$};
\draw[sumarrow] (1.05,0)--(2.65,0);
\filldraw[fill=turquoise!12,draw=turquoise,very thick,rounded corners=8pt] (2.9,-.65) rectangle (5.25,.65);
\node[align=center] at (4.08,0) {$L(4,1)$\\$\pi_1=\mathbb Z/4$};
\draw[sumarrow] (5.5,0)--(7.1,0);
\filldraw[fill=brightgreen!12,draw=brightgreen!80!black,very thick] (7.95,0) circle (.85);
\node at (7.95,0) {$B_2$};
\foreach \p in {(1.75,.35),(6.35,.35)}{\fill[brightorange] \p circle (.055);}
\node[note] at (4.05,-1.15) {$\mathbb Z/4\twoheadrightarrow\mathbb Z/2$};
\end{tikzpicture}}
\caption{Rational blowdown replaces the $(-4)$-disk bundle by $B_2$.  If a geometric dual kills the boundary meridian in the exterior, the operation preserves the fundamental group.}\label{fig:rational-blowdown}
\end{figure}

\begin{corollary}\label{cor:exoticity}
Let $n\ge2$, $h\ge1$, and $1\le m\le2n$.  Rationally blow down any $m$ spheres from the dualized collection in \Cref{thm:knot-double}.  The resulting symplectic manifold $D_{K,m}(n)$ is simply connected.  If $m\not\equiv0\pmod8$, then it is homeomorphic but not diffeomorphic to
\[
 \#_{6n+4h-5}\CP^2\ \#\ \#_{22n+4h-5-m}\CPb.
\]
\end{corollary}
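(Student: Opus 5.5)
The plan is to assemble the statement from three ingredients already in place: simple connectivity from \Cref{thm:knot-double}, Freedman's classification for the homeomorphism type, and Taubes' nonvanishing theorem for the smooth distinction.

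First, symplecticity. $D_K(n)$ is a Gompf symplectic fiber sum along a regular fiber $F$ of square zero. By \Cref{thm:knot-double}, the spheres $P_a$ are disjoint symplectic $(-4)$-spheres in it. Choose disjoint standard neighborhoods of the $m$ selected $P_a$ and apply Symington's theorem in each, exactly as in \Cref{cor:symp-rb}. This produces a symplectic structure on $D_{K,m}(n)$.

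Second, simple connectivity. The duals $A_a^{(1)}$ form a simultaneous system: $|A_a^{(1)}\cap P_b|=\delta_{ab}$. Hence \Cref{prop:dual-criterion}, or equivalently \Cref{prop:meridian-quotient} with every $\mu_i$ null-homotopic in $U$, gives $\pi_1(D_{K,m}(n))\cong\pi_1(D_K(n))=1$. This is already the content of \Cref{thm:knot-double}, so this step only needs to be cited.

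Third, the homeomorphism type. From \eqref{eq:knot-blowdown-invariants} we have $b_2^+=6n+4h-5$ and $b_2^-=22n+4h-5-m$. As a sanity check, $b_2=e-2=28n+8h-10-m$ and $(b_2+\sigma)/2=6n+4h-5$, which agrees.

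Both numbers are positive, since $n\ge2$, $h\ge1$ and $m\le2n$, so the intersection form is indefinite. It remains to rule out an even form. If the form were even, Rokhlin's theorem (already van der Blij's lemma suffices) would force $\sigma\equiv0\pmod 8$. But $\sigma=-16n+m\equiv m\pmod 8$, and $m\not\equiv0\pmod8$ by hypothesis. Therefore the form is odd, indefinite and unimodular, hence diagonal. Freedman's theorem then identifies $D_{K,m}(n)$, up to homeomorphism, with $\#_{6n+4h-5}\CP^2\#\#_{22n+4h-5-m}\CPb$. This is exactly where the parity hypothesis enters; for $m\equiv0\pmod 8$ one would have to decide the type by some other means.

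Fourth, the smooth distinction. Since $b_2^+=6n+4h-5\ge 8>1$ and the manifold is symplectic, Taubes' theorem gives $SW_{D_{K,m}(n)}(\pm K)=\pm1$. On the other hand, the connected sum splits as $(\#_{b^+}\CP^2)\#(\#_{b^-}\CPb)$ with both summands having $b_2^+>0$ whenever $b_2^+\ge2$. Its Seiberg--Witten invariants therefore vanish by the connected-sum vanishing theorem. As Seiberg--Witten invariants are diffeomorphism invariants, the two manifolds are not diffeomorphic.

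The main obstacle I expect is not the numerics but the symplectic input. One must check that the sum form on $D_K(n)$ and the knot-surgery forms can be chosen so that all $P_a$ are simultaneously symplectic with standard neighborhoods. That way Symington's blowdown applies to the chosen $m$ spheres at once and yields a genuine symplectic form with $b_2^+>1$. I would rely on \Cref{lem:resolved-duals} (the configuration avoids the knot-surgery torus and the summing fiber) together with the product-chart gluing argument of \Cref{prop:minus-two} for this. The Seiberg--Witten argument needs only the existence of that form, not its canonical class.
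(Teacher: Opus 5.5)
Your proposal follows the paper's proof essentially step for step:
\begin{itemize}
\item Symington's theorem gives the symplectic structure.
\item The simultaneous duals and \Cref{prop:dual-criterion} give simple connectivity.
\item The congruence $\sigma=-16n+m\equiv m\pmod 8$ forces an odd indefinite form, and Freedman then gives the homeomorphism type.
\item Taubes' nonvanishing, set against connected-sum vanishing, gives the smooth distinction.
\end{itemize}

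One step needs correcting as written. In your splitting $(\#_{b^+}\CP^2)\#(\#_{b^-}\CPb)$, the second summand has $b_2^+=0$, so the vanishing theorem does not apply. Split off a single $\CP^2$ instead, e.g.\ $\CP^2\#\bigl(\#_{b^+-1}\CP^2\#\#_{b^-}\CPb\bigr)$. Both pieces then have positive $b_2^+$ because $b^+\ge2$; this is evidently what your qualifier ``whenever $b_2^+\ge2$'' intended.

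One point also deserves an explicit sentence. For an odd form, Freedman's classification gives two homeomorphism types. What picks out the diagonal connected sum is that the Kirby--Siebenmann invariant of the smooth manifold $D_{K,m}(n)$ vanishes, which the paper states explicitly.
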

\begin{proof}
Simple connectivity follows from \Cref{thm:knot-double,prop:dual-criterion}.  The manifold is closed, oriented, and smooth, so its Kirby-Siebenmann invariant vanishes.  When $m\not\equiv0\pmod8$, the signature $-16n+m$ is not divisible by eight, and the unimodular intersection form is odd.  The displayed values of $b_2^\pm$ make it indefinite.  Freedman's classification identifies the orientation-preserving homeomorphism type with the displayed diagonal connected sum \cite{Freedman}.

By Symington's theorem, $D_{K,m}(n)$ is symplectic.  Since $b_2^+>1$, Taubes' theorem makes its canonical class a Seiberg-Witten basic class \cite{Taubes}.  The diagonal connected sum has vanishing Seiberg-Witten invariants because it splits into two summands with positive $b_2^+$.  Thus the two manifolds are not diffeomorphic.  This parity argument does not decide the case $m\equiv0\pmod8$.
\end{proof}

\begin{proof}[Proof of \Cref{thm:intro-main}]
Combine \Cref{thm:knot-double} with \Cref{cor:exoticity}.
\end{proof}

\section{Concluding remarks}
In a double sum the Gurtas sections yield smooth $(-4)$-spheres, while in a fourfold sum they yield symplectic ones.  For the $2n$ branch sections, the resolution spheres kill the meridians simultaneously; the resulting rational blowdowns are simply connected and are exotic in the range of \Cref{cor:exoticity}.  Removing one branch section together with a regular fiber also gives the Stein complement of \Cref{cor:stein-complement}.  It remains to determine whether the other $2n$ sections in the maximal $4n$-section system admit simultaneous geometric duals.

In follow up work \cite{AkhmedovSakalli}, the authors study Stein fillings obtained by removing suitable fibers and section configurations. These constructions extend the one-section complement perspective to configurations naturally associated with the Lefschetz fibrations considered here.


\appendix
\section{The branch-base and total-space lattices}
The construction uses one intersection lattice on the branch base and another on the total space.  Put $Q_k=\Sigma_k\times S^2$ and let
\[
 A_0=[\Sigma_k\times\{q\}],\qquad B_0=[\{p\}\times S^2],
\]
so $A_0^2=B_0^2=0$ and $A_0\cdot B_0=1$.  Choose distinct points $q_1,q_2\in S^2$ and $p_1,\ldots,p_{2n}\in\Sigma_k$.  The reduced branch divisor is
\[
 \Delta=H_1\cup H_2\cup V_1\cup\cdots\cup V_{2n},
 \quad H_i=\Sigma_k\times\{q_i\},\quad V_j=\{p_j\}\times S^2,
\]
and
\begin{equation}\label{eq:branch-class}
 [\Delta]=2A_0+2nB_0=2(A_0+nB_0).
\end{equation}
Each component occurs with branch multiplicity one.  Each $H_i$ meets each $V_j$ once, giving $4n$ ordinary nodes; see \cref{fig:branch}.

\begin{figure}[t]
\centering
\resizebox{.98\textwidth}{!}{%
\begin{tikzpicture}[>=Latex]
\filldraw[fill=turquoise!5,draw=brightblue!65,rounded corners=8pt,thick] (-.55,-.12) rectangle (6.75,2.72);
\foreach \x/\j in {0.8/1,1.8/2,2.8/3,4.8/{2n-1},5.8/{2n}}{
  \draw[turquoise,very thick,densely dashed,line cap=round] (\x,.12)--(\x,2.38);
  \node[turquoise!70!black] at (\x,-.3) {$V_{\j}$};
}
\node at (3.8,1.25) {$\cdots$};
\draw[section,line cap=round] (0,0.75)--(6.6,.75);
\draw[section,line cap=round] (0,1.75)--(6.6,1.75);
\node[brightred,left] at (0,.75) {$H_1$};
\node[brightred,left] at (0,1.75) {$H_2$};
\foreach \x in {.8,1.8,2.8,4.8,5.8}{\fill[brightorange] (\x,.75) circle (.07);\fill[brightorange] (\x,1.75) circle (.07);}
\node at (3.3,3.05) {$Q_k=\Sigma_k\times S^2$};
\draw[sumarrow] (7.0,1.25)--(8.5,1.25);
\node[align=center] at (7.75,2.0) {blow up\\$4n$ nodes};
\filldraw[fill=brightgreen!10,draw=brightviolet,rounded corners=8pt,very thick] (8.8,.25) rectangle (12.3,2.25);
\node[align=center] at (10.55,1.25) {$\widetilde Q_k$ with disjoint\\strict transforms};
\end{tikzpicture}}
\caption{The branch divisor consists of two reduced horizontal components and $2n$ reduced vertical components.  Their $4n$ transverse intersections are blown up before taking the smooth double cover.}\label{fig:branch}
\end{figure}

Let $\rho\colon\widetilde Q_k\to Q_k$ be the blow-up at these nodes, with exceptional classes $\varepsilon_{ij}$, $i=1,2$, $j=1,\ldots,2n$.  The strict transform satisfies
\begin{equation}\label{eq:strict-transform}
 [\widetilde\Delta]=2A_0+2nB_0-2\sum_{i,j}\varepsilon_{ij}
 =2L,\qquad L=A_0+nB_0-\sum_{i,j}\varepsilon_{ij}.
\end{equation}
The double cover $\pi\colon\widetilde Y\to\widetilde Q_k$ branched along the smooth disjoint divisor $\widetilde\Delta$ resolves the nodal double cover; its identification with $Y(n,k)$ is the one established in the Gurtas-Akhmedov-Saglam construction \cite{Gurtas1,Gurtas2,AS}.  The inverse image of a fiber of the projection $Q_k\to S^2$ is a double cover of $\Sigma_k$ branched at $2n$ points.  Riemann-Hurwitz gives
\[
 2-2g=2(2-2k)-2n,\qquad g=2k+n-1.
\]

The branch-base classes above are not the classes in a ruled blow-up basis of the total space.  Write $A_Y,B_Y,E_1,\ldots,E_{4n}$ for a chosen basis of
$Y(n,k)\cong\Sigma_k\times S^2\#4n\CPb$.  The fiber class recorded by the construction is
\begin{equation}\label{eq:fiber-class}
 F=2A_Y+nB_Y-\sum_{a=1}^{4n}E_a.
\end{equation}
It satisfies $F^2=0$, $E_a\cdot F=1$, and, for
\[
 K_Y=-2A_Y+(2k-2)B_Y+\sum_aE_a,
\]
one has $K_Y\cdot F=4k+2n-4=2g-2$.  Thus $A_0,B_0,\varepsilon_{ij}$ belong to the branch-base lattice, whereas $A_Y,B_Y,E_a$ belong to the total-space lattice.

\section*{Acknowledgments}
A large language model was used only to assist with the preparation and typesetting of the figures.  The authors checked the resulting figures and take full responsibility for the mathematical content of the paper.

\end{document}